\newtheorem{theorem}{Theorem}
\newtheorem{lem}{Lemma}[section]
\newtheorem{corollary}{Corollary}[theorem]
\theoremstyle{definition}
\newtheorem{definition}{Definition}
\newtheorem*{remark}{Remark}
\newcommand{\mR}{\mathbb{R}}
\newcommand{\mE}{\mathbb{E}}
\newcommand{\mL}{\mathcal{L}}
\newcommand{\bw}{\boldsymbol{w}}
\newcommand{\bz}{\boldsymbol{z}}
\newcommand{\bu}{\boldsymbol{u}}
\newcommand{\Z}{\boldsymbol{Z}}
\newcommand{\mat}[1]{\mathbf{#1}}
\newcommand{\parallelsum}{\mathbin{\!/\mkern-5mu/\!}}
\begin{document}
	
\title{Convergence of a Relaxed Variable Splitting Coarse Gradient Descent Method for Learning
	Sparse Weight Binarized Activation Neural Networks}

\author{Thu Dinh and Jack Xin \thanks{Department of Mathematics, University of California at Irvine, Irvine, CA 92697, USA. Email: (thud2, jack.xin)@uci.edu. }}
\maketitle

\thispagestyle{empty}

\begin{abstract}
\label{sec:abs}
Sparsification of neural networks is one of the effective complexity reduction methods to improve efficiency and generalizability. Binarized activation offers an additional computational saving for inference. 
Due to vanishing gradient issue in training networks with binarized activation, coarse gradient (a.k.a. straight through estimator) is adopted in practice.   
In this paper, we study the problem of coarse gradient descent (CGD) learning of a one hidden layer convolutional neural network (CNN) with binarized activation function and sparse weights. It is known that when the input data is Gaussian distributed, no-overlap one hidden layer CNN with ReLU activation and general weight can be learned by GD in polynomial time at high probability in regression problems with ground truth. We propose a relaxed variable splitting method integrating thresholding and coarse gradient descent. The sparsity in network weight is realized through thresholding during the CGD training process. We prove that under threshholding of $\ell_1, \ell_0,$ and transformed-$\ell_1$ penalties, no-overlap binary activation CNN can be learned with high probability, and the iterative weights converge to a global limit which is a transformation of the true weight under a novel sparsifying operation. We found explicit error estimates of sparse weights from the true weights.  

\end{abstract}

\bigskip

\hspace{.1 in} {\bf Keywords:} Sparse neural networks, sparse penalties, relaxed \\

\hspace{.1 in} variable splitting, thresholding, gradient descent, convergence.
\bigskip

\hspace{.1 in} {\bf Running Title:} Learning sparse neural networks.

\bigskip

\hspace{.1 in} {\bf AMS Subject Classifications:} 90C26, 97R40, 68T05.

\newpage
\setcounter{page}{1}

\section{Introduction}
\label{sec:intro}
Deep neural networks (DNN) have achieved state-of-the-art performance on many machine learning tasks such as 
speech recognition  \cite{Hinton}, computer vision \cite{Krizhevsky}, and natural language processing  \cite{Dauphin}. Training such networks is a problem of minimizing a high-dimensional non-convex and  non-smooth objective function, and is often solved by first-order methods such as stochastic gradient descent (SGD).
Nevertheless, the success of neural network training remains to be understood from a theoretical perspective. Progress has been made in simplified model problems.  \cite{Blum} showed that even training a 3-node neural network is NP-hard, and \cite{Shamir} showed learning a simple one-layer fully connected neural network is hard for some specific input distributions. 
Recently, several works (\cite{Tian}; \cite{Brutzkus}) focused on the geometric properties of loss functions, which is made possible by assuming that the input data distribution is Gaussian. They showed that SGD with random or zero  initialization is able to train a no-overlap neural network in polynomial time.

Another prominent issue is that DNNs contain millions of parameters and lots of redundancies, potentially causing over-fitting and poor generalization \cite{Zhang2016} besides spending unnecessary computational resources. 
One way to reduce complexity is to sparsify the network weights using an empirical technique called pruning \cite{LeCun1989} so that the non-essential ones are zeroed out with minimal loss of performance \cite{Han2015,Ullrich2017,Molch2017}. Recently a surrogate $\ell_0$ regularization approach based on a continuous relaxation of Bernoulli random variables in the distribution sense is introduced with encouraging results on small size image data sets \cite{Welling}. This motivated our work here to study deterministic regularization of $\ell_0$ via its Moreau envelope and related $\ell_1$ penalties in a one hidden layer convolutional neural network model \cite{Brutzkus}. Moreover, we consider binarized activation which further reduces computational  costs \cite{YinBlend}.

The architecture of the network is illustrated in Figure \ref{network_structure} similar to \cite{Brutzkus}. We consider the convolutional setting in which a sparse filter $\bw \in \mR^d$ is shared among different hidden nodes. The input sample is $\Z \in \mR^{k\times d}$. Note that this is identical to the one layer non-overlapping case where the input is $x \in \mR^{kd}$ with $k$ non-overlapping patches, each of size $d$.

\begin{figure}[h]
	\centering
	\includegraphics[scale = 0.3]{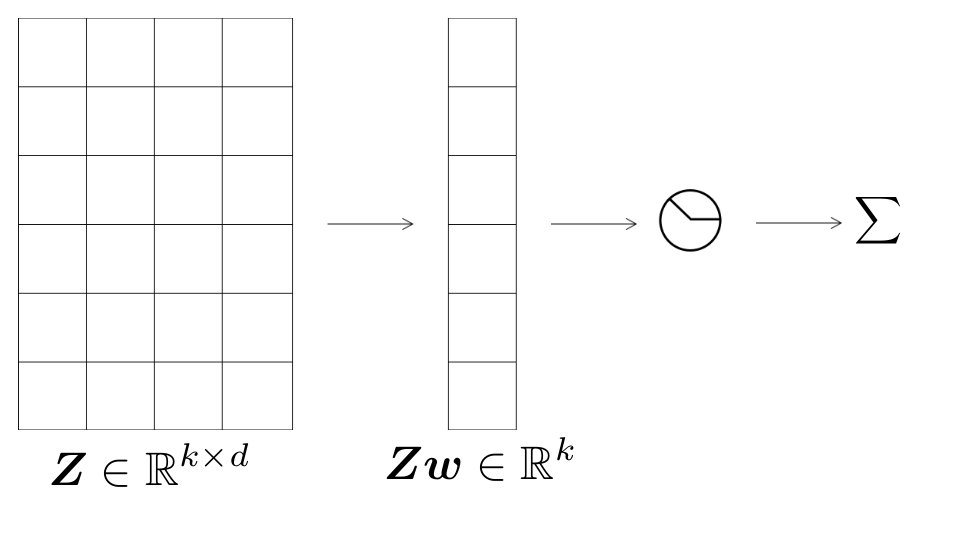}
	\caption{The architecture of a no-overlap neural network}
	\label{network_structure}
\end{figure}

We also assume that the vectors of $\Z$ are i.i.d. Gaussian random vectors with zero mean and unit variance. Let $\mathcal{G}$ denote this distribution. Finally, let $\sigma$ denote the binarized ReLU activation function, $\sigma(z) := \chi_{\{z > 0\}}$ which equals 1 if $z>0$, and 0 otherwise. The output of the network in Figure \ref{network_structure} is given by:
\begin{equation}\label{Eq1}
h(\bw,\Z) = \bm{1}^T \sigma(\Z\bw).
\end{equation}

We address the realizable case, where the response training data is mapped from the input training data $\Z$ by equation \eqref{Eq1} with a ground truth unit weight vector $\bw^*$. The input training data is generated by sampling $m$ training points $\Z^1,..,\Z^m$ from a Gaussian distribution. The learning problem seeks $\bw$ to minimize  the empirical risk function:
\begin{equation}\label{emploss}
l (\bw,\Z) := \frac{1}{m}\sum_{j=1}^{m}\,  (h(\bw,\Z^j)-h(\bw^*,\Z^j))^2
\end{equation}

Due to binarized activation, the gradient of $l$ in $w$ is almost everywhere zero, hence in-effective for descent. Instead, an approximate gradient on the coarse scale, the so called coarse gradient 
(denoted as $\tilde{\nabla}_{\bw} l$) is adopted as proxy and is proved to drive the iterations to global minimum \cite{YinBlend}.

In the limit $m \uparrow \infty$, the empirical risk $l$ converges to the population risk:
\begin{equation}
f(\bw) := \mE_{\Z \sim \mathcal{G}} \left[ (h(\bw,\Z)-h(\bw^*,\Z))^2\right]
\end{equation}
which is more regular in $\bw$ than $l$. However, the ``true gradient'' $\nabla_{\bw} f$ is inaccessible in practice. On the other hand, the coarse gradient $\tilde{\nabla}_{\bw}\, l$ in the limit $m\uparrow \infty$ forms an acute angle with the true gradient \cite{YinBlend}. Hence the expected coarse gradient descent (CGD) essentially minimizes the population risk $f$ as desired.

Our task is to sparsify $\bw$ in CGD.  
We note that the iterative thresholding algorithms (IT) are commonly used for  retrieving sparse signals (\cite{Daubechies,Candes,Blumensath,Blumensath2,Zhang1} and references therein). In high dimensional setting, IT algorithms provide simplicity and low computational cost, while also promote sparsity of the target vector. We shall investigate the convergence of CGD with simultaneous thresholding for the following objective function
\begin{equation}\label{orig}
  \phi(\bw) = f(\bw) + \lambda \, P(\bw)  
\end{equation}
where $f(\bw)$ is the population loss function of the network, and $P$ is $\ell_0$, $\ell_1$, or the transformed-$\ell_1$ (T$\ell_1$) function: a one parameter family of bilinear transformations composed with the absolute value function \cite{Nicolova00,Zhang2}. When acting on vectors, the T$\ell_1$ penalty interpolates $\ell_0$ and $\ell_1$ with thresholding  in closed analytical form for any parameter value \cite{Zhang1}. The $\ell_1$ thresholding function is known as soft-thresholding \cite{Daubechies, Donoho}, and that of $\ell_0$ the  hard-thresholding \cite{Blumensath, Blumensath2}. 
The thresholding part should be properly integrated with CGD to be applicable for learning CNNs. As pointed out in  \cite{Welling}, it is beneficial to attain sparsity during the  optimization (training) process.

{\bf Contribution.}
 We propose a Relaxed Variable Splitting (RVS) approach combining thresholding and CGD for minimizing the following augmented objective function
\begin{equation}\label{Lagr} \mL_\beta(\bu,\bw) = f(\bw) + \lambda\, P(\bu) + \frac{\beta}{2}\, \|\bw-\bu\|^2 \end{equation}
for a positive parameter $\beta$.
We note in passing that minimizing $\mL_\beta$ in $\bu$ recovers the original objective (\ref{orig}) with penalty $P$ replaced by its Moreau envelope \cite{Moreau}. We shall prove that our algorithm (RVSCGD), alternately minimizing $\bu$ and $\bw$, converges for $\ell_0$, $\ell_1$, and T$\ell_1$ penalties to a global limit $(\bar{\bw},\bar{\bu})$ with high probability.  A key estimate is the Lipschitz inequality of the expected coarse gradient (Lemma \ref{properties of coarse gradient}). Then the descent of Lagrangian function (\ref{Lagr}) and the angles between the iterated $\bw$ and $\bw^*$ follows. 
The $\bar{\bw}$ is a novel thresholded version of the true weight $\bw^*$ modulo some normalization. The $\bar{\bu}$ is a sparse approximation of $\bw^*$. {\it To our best knowledge, this result is the first to establish the convergence of CGD for sparse weight binarized activation networks}. In numerical experiments, we observed  that the $\bar{\bu}$ limit of RVSCGD with the $\ell_0$ penalty recovers sparse $\bw^*$ accurately. 

{\bf Outline.} In Section 2, we briefly overview related mathematical results in the study of neural networks and complexity reduction. Preliminaries are in section 3. In Section 4, we state and discuss the main results. The proofs of the main results
are in Section 5, and numerical results in Section 6. The conclusion of this paper is in Section 7 and the acknowledgement is in Section 8.

\section{Related Work}
\label{sec:rel}
In recent years, significant progress has been made in the study of convergence in neural network training. From a theoretical point of view, optimizing (training) neural network is a non-convex non-smooth optimization problem. \cite{Blum, Livni, Shalev}  showed that training a neural network is hard in the worst cases. \cite{Shamir}
showed that if either the target function or input distribution is ``nice", optimization, algorithms used in practice can succeed.
Optimization methods in deep neural networks are often categorized into (stochastic) gradient descent methods and others. 

Stochastic gradient descent methods were first proposed by \cite{Robins}. The popular back-propagation algorithm was introduced in \cite{Rumelhart}. Since then, many well-known SGD methods with adaptive learning rates were proposed and applied in practice, such as the Polyak momentum  \cite{Polyak}, AdaGrad  \cite{ADAGrad}, RMSProp  \cite{RMSProp}, Adam  \cite{Adam}, and AMSGrad  \cite{AMSGrad}.\\ 
The behavior of gradient descent methods in neural networks is better understood when the input has {\it Gaussian} distribution. \cite{Tian} showed that the population gradient descent can recover the true weight vector with random initialization for one-layer one-neuron model. \cite{Brutzkus}
proved that a convolution filter with non-overlapping input can be learned in polynomial time.  \cite{Du1} showed (stochastic) gradient descent with random initialization can learn the convolutional filter in polynomial time and the convergence rate depends on the smoothness of the input distribution and the closeness of patches.  \cite{Du} analyzed the  polynomial convergence guarantee of randomly initialized gradient descent algorithm for learning a one-hidden-layer convolutional neural network. A hybrid projected SGD (so called BinaryConnect) is widely used for training various weight quantized DNNs \cite{BC15,YinObj}. 
Recently, a Moreau envelope based relaxation method (BinaryRelax) is proposed and analyzed to advance 
weight quantization in DNN training \cite{Yin2018}. Also a blended coarse gradient descent method \cite{YinBlend} is introduced to train fully quantized DNNs in weights and activation functions, and overcome vanishing gradients. For earlier work on coarse gradient (a.k.a. straight through estimator), see \cite{hinton12,bnn_16,halfwave17} among others. 

Non-SGD methods for deep learning include the Alternating Direction Method of Multipliers (ADMM) to transform a fully-connected neural network into an equality-constrained problem  \cite{Taylor}; method of auxiliary coordinates (MAC) to replace a nested neural network with a constrained problem without nesting  \cite{Carreira}.  \cite{Zhang} handled deep supervised hashing problem by an ADMM algorithm to overcome vanishing gradients.

For a similar model to (\ref{Lagr}) and treatment in a general context, see \cite{Attouch}; and in image processing, see \cite{Wu}. For analysis and computation on minimization of \eqref{Lagr} to learn a neural network with sparse weight and regular ReLU function, see \cite{Dinh}.

\section{Preliminaries}
\label{sec:prelim}
Consider a non-overlap one layer convolutional network, where the input feature $\Z \in \mR^{k \times d}$ is i.i.d. Gaussian random vector with zero mean and unit variance. Let $\mathcal{G}$ denote this distribution. Let $\sigma$  be the binarized ReLU function:
\[ \sigma(x) = \begin{cases}
0 & \text{if } x \leq 0\\
1 & \text{if } x > 0.
\end{cases} \]
We define the training sample loss by
\begin{equation}\label{sqloss}
l(\bw,\Z) := \frac{1}{2}(\mat{1}^T\sigma(\Z\bw) - \mat{1}^T\sigma(\Z\bw^*))^2
\end{equation}
where $\bw^* \in \mR^d$ is the underlying (non-zero) teaching parameter. Note that (\ref{sqloss}) is invariant under scaling $\bw \to \bw/c$, $\bw^* \to \bw^*/c$, for any scalar $c > 0$. Without loss of generality, we assume $\|\bw^*\|=1$.
Given independent training samples $\{\Z^1,...,\Z^N\}$, the associated empirical risk minimization reads
\begin{equation}\label{emprisk} \min_{\bw \in \mR^d} \frac{1}{N} \sum_{i=1}^N l(\bw,\Z^i). 
\end{equation}
The empirical risk function in  (\ref{emprisk}) is piece-wise constant and has a.e. zero partial $\bw$ gradient. 
If $\sigma$ were differentiable, then back-propagation would rely on:
\begin{equation}\label{partial l} 
\frac{\partial \, l}{\partial \bw}(\bw,\Z) = \Z^T \sigma'(\Z\bw)(\sigma(\Z\bw) - \sigma(\Z\bw^*)).
\end{equation}
However, $\sigma$ has zero derivative a.e., rendering \eqref{partial l} inapplicable. We study the coarse gradient descent with $\sigma'$ in \eqref{partial l} replaced by the (sub)derivative $\mu'$ of the regular ReLU function $\mu(x) := \max(x,0)$. More precisely, we use the following surrogate of $\frac{\partial l}{\partial \bw}(\bw,\Z)$:
\begin{equation}\label{equation g}
g(\bw,\Z) = \sqrt{\frac{2}{\pi}}\Z^T \mu'(\Z\bw)(\sigma(\Z\bw) - \sigma(\Z\bw^*))
\end{equation}
with $\mu'(x) = \sigma(x)$. The constant $\sqrt{\frac{2}{\pi}}$ will be necessary to give a stronger result for our main findings. To simplify our analysis, we let $N\uparrow \infty$ in (\ref{emprisk}), so that its coarse gradient approaches $\mE_{\Z}[g(\bw,\Z)]$. The following lemma asserts that $\mE_{\Z}[g(\bw,\Z)]$ has positive correlation with the true gradient $\nabla f(\bw)$, and consequently, $-\mE_{\Z}[g(\bw,\Z)]$ gives a reasonable descent direction.

\begin{lem}\label{positive correlation}
	If $\theta(\bw,\bw^*) \in (0,\pi)$, and $\|\bw\| \ne 0$, then the inner product between the expected coarse and true gradient w.r.t. $\bw$ is
	\[ \left\langle \mE_{\Z}[g(\bw,\Z)],\nabla f(\bw) \right\rangle
	= \frac{\sin(\theta(\bw,\bw^*))}{4\pi^2\|\bw\|} \geq 0.  \]
\end{lem}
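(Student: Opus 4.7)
The plan is to evaluate $\nabla f(\bw)$ and $\mE_{\Z}[g(\bw,\Z)]$ in closed form via Gaussian integration, then take their inner product. I would fix an orthonormal basis $(e_1,e_2,\dots,e_d)$ of $\mR^d$ with $e_1 = \bw/\|\bw\|$ and $e_2$ chosen so that $\bw^* = \cos\theta\, e_1 + \sin\theta\, e_2$ for $\theta = \theta(\bw,\bw^*)$. Since the rows $\bz_1,\dots,\bz_k$ of $\Z$ are i.i.d.\ standard Gaussian, both expectations split as $k$ copies of a single-row expectation that depends only on the two coordinates $z_1 = \bz\cdot e_1$ and $z_2 = \bz\cdot e_2$; the remaining coordinates are independent of the relevant indicator events and integrate out.

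For $\nabla f$, I would first compute $f(\bw)$ in closed form by expanding $(h(\bw,\Z)-h(\bw^*,\Z))^2 = (\sum_i X_i)^2$ with $X_i := \sigma(\bz_i\bw) - \sigma(\bz_i\bw^*)$. Each $X_i$ has mean zero by symmetry and the $X_i$'s are mutually independent, so only the diagonal terms $\mE[X_i^2]$ survive. Using the classical identity $\mP(\bz\cdot\bw > 0,\, \bz\cdot\bw^* > 0) = (\pi-\theta)/(2\pi)$ for two unit-variance Gaussians, $\mE[X_1^2]$ reduces to $\theta/\pi$, so $f$ is proportional to $\theta$. Differentiating $\cos\theta = \bw^T\bw^*/\|\bw\|$ in the chosen basis gives $\nabla\cos\theta = \sin\theta\, e_2/\|\bw\|$, hence $\nabla\theta = -e_2/\|\bw\|$ and $\nabla f$ is a negative scalar multiple of $e_2/\|\bw\|$.

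For $\mE_{\Z}[g]$, the key algebraic simplification is that $\mu'(t) = \sigma(t) \in \{0,1\}$ satisfies $\sigma(t)(\sigma(t)-\sigma(s)) = \chi_{\{t>0,\, s \le 0\}}$, which collapses each row's contribution to a single indicator. Writing $R := \{\bz\cdot\bw > 0,\, \bz\cdot\bw^* \le 0\}$, I would pass to polar coordinates $(z_1,z_2) = (r\cos\phi, r\sin\phi)$: the two half-plane conditions become $\cos\phi > 0$ and $\cos(\phi-\theta) \le 0$, whose intersection is the arc $\phi \in (-\pi/2,\, \theta - \pi/2)$ of length $\theta$. Separating the radial and angular integrations and using $\int_0^\infty r^2 e^{-r^2/2}\, dr = \sqrt{\pi/2}$, I obtain $\mE[\bz\, \chi_R]$ as an explicit linear combination of $(1-\cos\theta)\, e_1$ and $-\sin\theta\, e_2$, from which $\mE_{\Z}[g]$ follows after multiplication by $\sqrt{2/\pi}\,k$.

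Taking the inner product, the $e_1$-component of $\mE_{\Z}[g]$ is annihilated by $\nabla f$, which lies along $e_2$, so only the product of the two $e_2$-coefficients survives, yielding a positive multiple of $\sin\theta/\|\bw\|$. The only delicate point is the polar reduction of $R$: correctly identifying the arc $(-\pi/2,\,\theta-\pi/2)$ and the resulting sign $-\sin\theta$ in the $e_2$-component of $\mE[\bz\chi_R]$, since a sign error here would reverse the orientation of the inequality. Once these are pinned down, the remaining steps amount to elementary bookkeeping of the constants produced by the $\sqrt{2/\pi}$ normalization in $g$.
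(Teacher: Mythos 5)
Your proposal is correct and follows essentially the same route as the paper, which does not prove this lemma in-text but imports the closed forms of Lemma \ref{gradient lemma} from \cite{YinBlend}; your Gaussian/polar computation correctly reproduces $\mE_{\Z}[g(\bw,\Z)]=\tfrac{k}{2\pi}\bigl[(1-\cos\theta)e_1-\sin\theta\, e_2\bigr]$ and $\nabla f(\bw)=-\tfrac{k}{2\pi\|\bw\|}e_2$, after which the inner product is immediate. One remark: carrying your constants through gives $\tfrac{k^2\sin\theta}{4\pi^2\|\bw\|}$, so the displayed constant in the lemma is correct only for $k=1$ (a typo in the paper rather than a flaw in your argument); the positivity conclusion is unaffected.
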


Suppose we want to train the network in a way that $\bw^t$ converges to a limit $\bar{\bw}$ in some neighborhood of $\bw^*$, and we also want to promote sparsity in the limit $\bar{\bw}$. To this end, a natural function to minimize is (for a parameter $\lambda > 0$):
$\phi(\bw) = f(\bw) + \lambda\|\bw\|_1$,
where other choices of a sparse penalty $P$ include $\ell_0$ and $T\ell_1$. Our proposed relaxed variable splitting (RVS) proceeds by first extending $\phi$ 
into a function of two variables
$f(\bw) + \lambda\|\bu\|_1$,
then minimizing the Lagrangian function in \eqref{Lagr} 
alternately in $\bu$ and $\bw$. 
Minimization in $\bu$ is the thresholding operation from penalty $P$, and is in closed form for $\ell_1$, $\ell_0$ and T$\ell_1$. Minimization in $\bw$ is through CGD. The splitting realizes sparsity more effectively than having $P$ under CGD in case of $\ell_1$ (T$\ell_1$), and bypasses the non-existence of gradient in case of $\ell_0$. The resulting RSVCGD Algorithm is as follows:\\

\begin{algorithm}
	\caption{RVSCGD Algorithm}
	\label{RVSM Algorithm}
	\SetAlgoLined
	Initialize $\bu^0,\bw^0$\;
	\While{stopping criteria not satisfied}{
		$\bu^{t+1} \leftarrow \arg\min_{\bu}\,  \mL_\beta\, (\bu,\bw^t)$\\
		$\hat{\bw}^{t+1} \leftarrow \bw^t - \eta\, \mE_{\Z}[\, g(\bw^t,\Z)] - \eta\, \beta\, (\bw^t-\bu^{t+1})$\\
		$ \bw^{t+1} \leftarrow \frac{\hat{\bw}^{t+1}}{\|\hat{\bw}^{t+1}\|}$
	}
	\KwOut{$\bu^t,\bw^t$}
\end{algorithm}

Here the update of $w^t$ has the form $\bw^{t+1} = C^t(\bw^t - \eta\mE_{\Z}[g(\bw^t,\Z)] - \eta\beta(\bw^t-\bu^{t+1}))$, where $C^t$ is some normalization constant. This normalization process is unique to our proposed algorithm, and is distinct from other common descent algorithms, for example ADMM, where the update of $\bw$ has the form $\bw^{t+1} \leftarrow \arg\min_{\bw} \mL_\beta(\bu^{t+1},\bw,\bz^t)$ and $\bz^t$ is the Lagrange multiplier. Since $f$ is non-convex and only Lipschitz differentiable away from zero, convergence analysis of ADMM in this case is beyond the current theory \cite{Wang}. Here we  circumvent the problem by updating $\bw$ via CGD and then normalizing.

\begin{definition}\label{TL1Definition}
The transformed $\ell_1$ (T$\ell_1$) penalty function on $x \in \mR^d$ is
$P_a(x) := \sum_{i=1}^d \rho_a(x_i)$, where  
$\rho_a(x) = \frac{(a+1)|x|}{a+|x|}$,
for a parameter $a \in (0,+\infty)$. 
 
\end{definition}
By varying 'a', T$\ell_1$ interpolates $l_0$ and $l_1$.

\section{Main Results}
\label{sec:mainresult}
\begin{theorem}\label{Main result}
Suppose that the initialization and penalty parameters of the RVSCGD algorithm satisfy:\\
(i) $\theta(\bw^0,\bw^*) \leq \pi-\delta$, for some $\delta > 0$,\\
(ii) $\beta \leq \frac{k\sin\delta}{2\pi}$, and $\lambda < \frac{k}{2\pi\sqrt{d}}$;\\
and that the learning rate $\eta$ 
is small so that 
\[\eta \,\|\mE_{\Z}[g(\bw^t,\Z)] + \beta\, (\bw^t-\bu^{t+1})\| \leq \frac{1}{2},\; \; \forall t;
\]
and $\eta \leq \min\left\{ \frac{1}{\beta+L}, \frac{2\pi}{k} \right\}$, where $L$ is the Lipschitz constant in Lemma \ref{properties of coarse gradient}.
Then the Lagrangian $\mL_\beta(\bu^t,\bw^t)$ with $\ell_1$ penalty is monotonically decreasing, and $(\bu^t,\bw^t)$ converges to a limit point $(\bar{\bu},\bar{\bw})$. Let $\theta := \theta(\bar{\bw},\bw^*)$ and $\gamma := \theta(\bar{\bu},\bar{\bw})$, then $\theta < \delta$, and the critical point $(\bar{\bu},\bar{\bw})$ satisfies
\begin{equation}\label{limit point equality}
\bar{\bw} - \frac{2\pi}{k}\beta(\bar{\bw}-S_{\lambda/\beta}(\bar{\bw})) = C\bar{\bw}
\end{equation}
where $S_{\lambda/\beta}$ is the soft-thresholding operator of $\ell_1$, for some positive constant $C$ such that $C \leq \frac{k}{k-2\pi\lambda\sqrt{d}}$; and 
\begin{equation}\label{limit point bound}
\|\bw^*-\bar{\bw}\, \| \leq \frac{1}{2}\sin\delta\sin\gamma,
\end{equation}
\begin{equation}\label{limit point bound2}
\|\bw^*-\bar{\bu}\, \| \leq \frac{1}{2}\sin\delta\sin\gamma
+ {\lambda \over \beta} \sqrt{d}. 
\end{equation}
\end{theorem}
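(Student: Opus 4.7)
The plan is to first establish monotone descent of the Lagrangian $\mL_\beta$ along the alternating iterates, then deduce convergence to a limit point $(\bar{\bu},\bar{\bw})$, and finally characterize that limit via a fixed-point identity from which the error bounds (\ref{limit point bound}) and (\ref{limit point bound2}) follow. For the $\bu$-step, because $\bu^{t+1}=S_{\lambda/\beta}(\bw^t)$ is precisely the proximal minimizer of $\lambda\|\bu\|_1+(\beta/2)\|\bw^t-\bu\|^2$, one gets $\mL_\beta(\bu^{t+1},\bw^t)\leq\mL_\beta(\bu^t,\bw^t)$ for free. For the $\bw$-step, I would combine the Lipschitz inequality for $\mE_\Z[g(\cdot,\Z)]$ (Lemma \ref{properties of coarse gradient}) with the positive correlation of Lemma \ref{positive correlation} to derive a descent inequality for the unnormalized iterate $\hat{\bw}^{t+1}=\bw^t-\eta(\mE_\Z[g(\bw^t,\Z)]+\beta(\bw^t-\bu^{t+1}))$, valid provided $\eta\leq 1/(\beta+L)$. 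The subsequent projection $\bw^{t+1}=\hat{\bw}^{t+1}/\|\hat{\bw}^{t+1}\|$ requires a separate step; it should preserve descent because $f$ is invariant under positive rescaling of $\bw$ (the activation $\sigma$ is $0$-homogeneous, so $h(c\bw,\Z)=h(\bw,\Z)$ for $c>0$), while the learning-rate smallness $\eta\|\cdot\|\leq 1/2$ confines $\hat{\bw}^{t+1}$ to a neighborhood of the unit sphere where the quadratic term $\|\bw-\bu^{t+1}\|^2$ changes only by a controllable amount under projection.

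With $\mL_\beta$ bounded below and monotonically decreasing, a telescoping argument yields summable residuals, forcing $\|\bw^{t+1}-\bw^t\|$ and $\|\bu^{t+1}-\bu^t\|$ to vanish. Since the $\bw$-iterates lie on the compact unit sphere and the $\bu$-update is a continuous map of $\bw$, the sequence converges to some $(\bar{\bu},\bar{\bw})$. The inductive invariant $\theta(\bw^t,\bw^*)\leq\pi-\delta$ must be maintained so that Lemma \ref{positive correlation} applies at each step; this follows by showing the update rotates $\bw^t$ toward $\bw^*$, while the quadratic perturbation $\beta(\bw^t-\bu^{t+1})$ is too small to push the angle past $\pi$, under the assumption $\beta\leq k\sin\delta/(2\pi)$. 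To sharpen to $\theta(\bar{\bw},\bw^*)<\delta$ at the limit, I would argue by contradiction: if the limit angle were $\geq\delta$, strict positivity in Lemma \ref{positive correlation} would force nonzero descent at the limit, contradicting stationarity.

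Characterizing the fixed point is the centerpiece. At the limit, $\bar{\bu}=S_{\lambda/\beta}(\bar{\bw})$ from the $\bu$-step, while the normalized $\bw$-step yields $\hat{\bw}=C^{-1}\bar{\bw}$ for some $C>0$, i.e., $\mE_\Z[g(\bar{\bw},\Z)]+\beta(\bar{\bw}-\bar{\bu})=\eta^{-1}(1-C^{-1})\bar{\bw}$. The essential input here is an explicit closed form for $\mE_\Z[g(\bar{\bw},\Z)]$: for the no-overlap binarized-ReLU CNN, a direct Gaussian integration (e.g., via Stein-type identities applied row-wise to $\Z$) expresses it as $(k/(2\pi))$ times a specific combination of $\bar{\bw}$ and $\bw^*$. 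Substituting into the fixed-point balance and simplifying using $\|\bar{\bw}\|=\|\bw^*\|=1$ delivers the identity $\bar{\bw}-(2\pi\beta/k)(\bar{\bw}-S_{\lambda/\beta}(\bar{\bw}))=C\bar{\bw}$ of (\ref{limit point equality}), with the coordinate-wise bound $|(\bar{\bw}-S_{\lambda/\beta}(\bar{\bw}))_i|\leq\lambda/\beta$ yielding $\|\bar{\bw}-\bar{\bu}\|_2\leq\lambda\sqrt{d}/\beta$, and hence $C\leq k/(k-2\pi\lambda\sqrt{d})$ after taking norms on both sides.

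Finally, the error estimates follow by geometric bookkeeping. Reinterpreting (\ref{limit point equality}) as a decomposition of $\bar{\bw}$ into a scalar multiple of itself plus the perturbation $(2\pi\beta/k)(\bar{\bw}-\bar{\bu})$, where the latter makes angle $\gamma$ with $\bar{\bw}$, the component of this perturbation against $\bw^*$ is controlled by $\sin\delta\sin\gamma$ because $\theta(\bar{\bw},\bw^*)<\delta$; this yields $\|\bw^*-\bar{\bw}\|\leq(1/2)\sin\delta\sin\gamma$ as in (\ref{limit point bound}), and a triangle inequality combined with $\|\bar{\bw}-\bar{\bu}\|_2\leq\lambda\sqrt{d}/\beta$ gives (\ref{limit point bound2}). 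The main technical obstacles will be (a) handling the normalization step in the descent argument, since standard proximal-gradient analysis does not directly accommodate the projection onto the unit sphere, and (b) the explicit Gaussian computation of $\mE_\Z[g(\bar{\bw},\Z)]$ together with the delicate geometric reduction from the fixed-point identity to the sharp bound (\ref{limit point bound}); once these are settled, the remainder is a careful adaptation of standard proximal-gradient descent theory to this nonconvex, spherically constrained setting.
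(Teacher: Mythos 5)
Your overall architecture --- alternating descent of $\mL_\beta$, an inductive angle invariant, a fixed-point balance equation built on the closed form of $\mE_{\Z}[g(\cdot,\Z)]$, then geometric error bounds --- matches the paper's. Two of your steps, however, do not go through as described. The more serious one is your derivation of $\theta(\bar{\bw},\bw^*)<\delta$ by contradiction: you claim that if the limit angle were $\geq\delta$, the strict positivity in Lemma \ref{positive correlation} would force nonzero descent at the limit. But that positivity holds for \emph{every} $\theta\in(0,\pi)$, so the same reasoning would prove $\theta=0$, which is false. At the limit $\mE_{\Z}[g(\bar{\bw},\Z)]=\frac{k}{2\pi}(\bar{\bw}-\bw^*)\neq 0$ in general; stationarity is not the vanishing of the coarse gradient but the condition that $\mE_{\Z}[g(\bar{\bw},\Z)]+\beta(\bar{\bw}-\bar{\bu})$ be parallel to $\bar{\bw}$, the radial part being absorbed by the normalization. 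The tangential pull toward $\bw^*$ is exactly cancelled by the tangential component of $\beta(\bar{\bw}-\bar{\bu})$. The paper extracts from this co-planarity the identity $\frac{k}{2\pi}\sin\theta=\beta\|\bar{\bu}\|\sin\gamma$, and only then do $\beta\leq\frac{k\sin\delta}{2\pi}$ and $\|\bar{\bu}\|<1$ give $\sin\theta<\sin\delta$, hence $\theta<\delta$ (the branch $\theta>\pi-\delta$ being excluded by the angle-descent invariant). This same identity, fed into the law of sines for the triangle with sides $\bw^*$, $\bar{\bw}$, $\bw^*-\bar{\bw}$, is what produces the factor $\sin\gamma$ in \eqref{limit point bound}; your ``component of the perturbation against $\bw^*$'' heuristic has no mechanism to generate it.

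The second gap is the normalization step in the Lagrangian descent. Scale-invariance of $f$ handles only half the problem: the coupling term $\frac{\beta}{2}\|\bw-\bu^{t+1}\|^2$ is \emph{not} invariant under rescaling of $\bw$, and ``changes only by a controllable amount'' is not an argument. The paper's Lemma \ref{lagrangian decreases lemma} resolves this with an explicit computation: writing $\bw^{t+1}=C^t\hat{\bw}^{t+1}$ with $C^t=1/\|\hat{\bw}^{t+1}\|$, the step-size hypothesis $\eta\|\mE_{\Z}[g(\bw^t,\Z)]+\beta(\bw^t-\bu^{t+1})\|\leq\frac12$ forces $\frac23\leq C^t\leq 2$, and a spherical-geometry argument then shows $\langle \frac{\bw^{t+1}}{C^t}-\bw^t,\,\bw^{t+1}-\bw^t\rangle\geq \min\{1,1/C^t\}\,\|\bw^{t+1}-\bw^t\|^2$, which supplies the negative term that dominates $\frac{L+\beta}{2}\|\bw^{t+1}-\bw^t\|^2$ when $\eta\leq\frac{1}{\beta+L}$. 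Note that your sketch never actually uses the hypothesis $\eta\|\cdot\|\leq\frac12$, which is a sign the normalization has not been dealt with. A smaller point in the same direction: the coarse descent inequality \eqref{descent condition} is not a consequence of Lemma \ref{positive correlation} plus Lipschitzness (acute correlation between $\mE_{\Z}[g]$ and $\nabla f$ does not control $\langle \nabla f-\mE_{\Z}[g],\bw_2-\bw_1\rangle$); the paper proves it directly from the explicit formulas for $f$ and $\mE_{\Z}[g]$ on the unit sphere, using the monotonicity of $\theta\mapsto\theta+\cos\theta$ and the already-established fact $\theta^{t+1}\leq\theta^t$.
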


\begin{remark}
The sign of $(\bar{\bw}-S_{\lambda/\beta}(\bar{\bw}))$ agrees with $\bar{\bw}$. Thus $\bar{w}$ is a soft-thresholded version of $\bw^*$, after some normalization. The assumption on $\eta$ is reasonable, as will be shown below: $\|\mE_{\Z}[g(\bw^t,\Z)]\|$ is bounded away from zero, and thus $\|\mE_{\Z}[g(\bw^t,\Z)] + \beta(\bw^t-\bu^{t+1})\|$ is also bounded.
\end{remark}
\smallskip

\begin{corollary}\label{Corollary}
Suppose that the initialization of the RVSCGD algorithm satisfies the conditions in Theorem \ref{Main result}, and that the $\ell_1$ penalty is replaced by $\ell_0$ or T$\ell_1$.
Then the RVSCGD iterations converge to a limit point $(\bar{\bu},\bar{\bw})$  satisfying equation \eqref{limit point equality} with $\ell_0$'s hard thresholding operator \cite{Blumensath2} or T$\ell_1$ thresholding \cite{Zhang1} replacing $S_{\lambda/\beta}$, and similar bounds \eqref{limit point bound}-\eqref{limit point bound2} hold.
\end{corollary}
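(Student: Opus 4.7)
The plan is to revisit the proof of Theorem~\ref{Main result} and isolate the properties of the $\ell_1$ penalty that are genuinely used, then verify those properties persist for $\ell_0$ and T$\ell_1$. The $\bw$-update in Algorithm~\ref{RVSM Algorithm} is completely agnostic to the choice of $P$: the coarse gradient step followed by normalization depends only on $f$, $\beta$, and $\bu^{t+1}$. Hence the Lipschitz bound from Lemma~\ref{properties of coarse gradient}, the sufficient decrease in $\bw$, and the invariance $\theta(\bw^t,\bw^*)\le\pi-\delta$ all carry over unchanged, provided the magnitude $\|\bw^t-\bu^{t+1}\|$ is controlled at a comparable scale.

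For the $\bu$-update, the sufficient decrease $\mL_\beta(\bu^{t+1},\bw^t)\le\mL_\beta(\bu^t,\bw^t)$ is automatic for any proper lower-semicontinuous nonnegative $P$ because $\bu^{t+1}$ is chosen as a global minimizer of $\bu\mapsto\mL_\beta(\bu,\bw^t)$. Both $\ell_0$ and T$\ell_1$ admit closed-form proximal operators (hard thresholding and T$\ell_1$ thresholding respectively), and in each case the distance $\|\bw-\mathrm{prox}_{\lambda P/\beta}(\bw)\|_\infty$ is controlled by a threshold $\tau$ of order $\lambda/\beta$ (with $\tau=\lambda/\beta$ for $\ell_1$, $\tau=\sqrt{2\lambda/\beta}$ for $\ell_0$, and $\tau$ given by the explicit formula of \cite{Zhang1} for T$\ell_1$). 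Thus $\|\bw^t-\bu^{t+1}\|\le\tau\sqrt{d}$ in every case, and the smallness hypothesis on $\lambda$ in Theorem~\ref{Main result} only needs to be rescaled by the penalty-dependent factor.

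With these two observations in hand, the monotone descent of $\mL_\beta$, its boundedness below (using $P\ge 0$), and the resulting convergence of the iterates go through essentially verbatim. Passing to the limit in the $\bu$-update yields $\bar{\bu}\in\mathrm{prox}_{\lambda P/\beta}(\bar{\bw})$, while the $\bw$-update produces the fixed-point equation
\[
\bar{\bw}-\tfrac{2\pi}{k}\beta(\bar{\bw}-\bar{\bu})=C\bar{\bw}
\]
for a positive normalizing scalar $C$ with the same upper bound as in Theorem~\ref{Main result}. This is precisely \eqref{limit point equality} with $S_{\lambda/\beta}$ replaced by the appropriate hard- or T$\ell_1$-thresholding operator, and \eqref{limit point bound}--\eqref{limit point bound2} then follow by combining the angle estimate on $\bar{\bw}$ with the proximal distance bound $\|\bar{\bw}-\bar{\bu}\|\le\tau\sqrt{d}$ on the sparse residual.

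The step I expect to be genuinely delicate is the convergence of $\bu^t$ itself in the $\ell_0$ case: hard thresholding is discontinuous at coordinates whose magnitude exactly equals $\tau$, so convergence of $\bw^t$ does not transfer automatically to $\bu^t$. The remedy I have in mind is to exploit that, under the nondegeneracy of $\bw^*$, once $\bw^t$ enters a sufficiently small neighborhood of $\bar{\bw}$ every coordinate stays uniformly away from $\pm\tau$; the support of $\bu^t$ is then eventually constant and continuity is recovered on that stabilized support, giving the desired convergence. This case-splitting is the only point at which the three penalties truly require different handling; the $\ell_1$ and T$\ell_1$ thresholdings are continuous, so no such argument is needed for them.
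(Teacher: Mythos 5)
Your proposal is correct and follows essentially the same route as the paper: isolate the penalty-independent parts (the $\bw$-update, the Lipschitz and descent estimates of Lemma \ref{properties of coarse gradient}) and observe that the $\bu$-update for $\ell_0$ and T$\ell_1$ is still a closed-form componentwise thresholding that decreases $\mL_\beta$ by global minimality, so the limit-point equation follows with the corresponding prox operator. One small correction and one credit: the angle-invariance step hinges not on the size of $\|\bw^t-\bu^{t+1}\|$ but on the fact that componentwise thresholding preserves signs and does not increase magnitudes, which gives $\theta(\bu^{t+1},\bw^t)\le\tfrac{\pi}{2}$ and $\|\bu^{t+1}\|\le 1$ (the paper's Step 2 uses exactly this); and your closing remark on support stabilization for the discontinuous $\ell_0$ prox addresses a point the paper's proof silently skips, so it is a welcome refinement rather than a deviation.
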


\section{Proof of Main Results}
\label{sec:proof}
\subsection{Proof of Theorem \ref{Main result}}
The following Lemmas give the properties of the coarse gradient, as well as an outline for the proof of Theorem \ref{Main result}. The detail of each key step can be found in the proof of the corresponding Lemma. 
\begin{lem}\label{gradient lemma}
If every entry of $\Z$ is i.i.d. sampled from $\mathcal{N}(0,1), \|\bw^*\|=1$, and $\|\bw\| \ne 0$, then the true gradient of the population loss $f(\bw)$ is
\begin{equation}\label{population loss gradient}
\nabla f(\bw) = \frac{-k}{2\pi\|\bw\|}
\frac{\left( \mat{I} - \frac{\bw\bw^T}{\|\bw\|^2} \right)\bw^*}
{\left\|\left( \mat{I} - \frac{\bw\bw^T}{\|\bw\|^2} \right)\bw^*\right\|},
\end{equation}	
for $\theta(\bw,\bw^*) \in (0,\pi)$; and the expected coarse gradient w.r.t. $\bw$ is
\begin{align*}
&\mE_{\Z}[g(\bw,\Z)] \\
=& \frac{k}{\pi}\left[ \frac{\bw}{\|\bw\|} - \cos\left(\frac{\theta(\bw,\bw^*)}{2}\right)
\frac{\frac{\bw}{\|\bw\|}+\bw^*}{\left\|\frac{\bw}{\|\bw\|}+\bw^*\right\|} \right]
\stepcounter{equation}\tag{\theequation}\label{coarse gradient}
\end{align*}
\end{lem}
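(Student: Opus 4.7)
The plan is to compute both gradients by direct Gaussian integration, reducing everything to the two-dimensional plane $\mathrm{span}(\bw,\bw^*)$ via rotational invariance. Write $\hat{\bw} := \bw/\|\bw\|$ throughout. I would first derive a closed form for the population loss itself: writing $h(\bw,\Z)-h(\bw^*,\Z)=\sum_{i=1}^k[\sigma(\Z_i\bw)-\sigma(\Z_i\bw^*)]$ and expanding the square inside $f(\bw)=\tfrac{1}{2}\mE[(h(\bw,\Z)-h(\bw^*,\Z))^2]$, the off-diagonal terms $i\ne j$ vanish because the rows of $\Z$ are independent and each summand has mean zero (since $\mE[\sigma(z^T\bw)]=1/2$), while each diagonal term reduces to a two-orthant Gaussian probability $\mP(z^T\bw>0,\,z^T\bw^*>0)=(\pi-\theta(\bw,\bw^*))/(2\pi)$. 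This yields $f(\bw)=k\,\theta(\bw,\bw^*)/(2\pi)$. The true gradient (\ref{population loss gradient}) then follows from $\theta(\bw,\bw^*)=\arccos(\hat{\bw}\cdot\bw^*)$ by the chain rule: the gradient of $\bw\cdot\bw^*/\|\bw\|$ equals $(\mat{I}-\hat{\bw}\hat{\bw}^T)\bw^*/\|\bw\|$, and its norm equals $\sin\theta$ (since $\|\bw^*\|=1$), which cancels the $-1/\sin\theta$ factor coming from $\arccos'$.

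For the coarse gradient, using $\mu'=\sigma$ and the i.i.d.\ structure of the $k$ rows of $\Z$, it suffices to compute $\mE[z\,\sigma(z^T\bw)(\sigma(z^T\bw)-\sigma(z^T\bw^*))]$ for a single $z\sim\mathcal{N}(0,\mat{I}_d)$ and multiply by $k\sqrt{2/\pi}$. Since $\sigma^2=\sigma$, the first piece $\mE[z\,\sigma(z^T\bw)]$ is handled by splitting $z$ into its $\hat{\bw}$ and orthogonal components: the orthogonal part has zero mean and is independent of $\{z^T\bw>0\}$, while the parallel part contributes $\mE[g\,\mathbf{1}_{g>0}]=1/\sqrt{2\pi}$ with $g\sim\mathcal{N}(0,1)$, giving $\hat{\bw}/\sqrt{2\pi}$. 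For the more delicate second piece $\mE[z\,\sigma(z^T\bw)\sigma(z^T\bw^*)]$, I would fix an orthonormal basis $(\bu_1,\bu_2,\ldots)$ with $\bu_1=\hat{\bw}$ and $\bw^*=\cos\theta\,\bu_1+\sin\theta\,\bu_2$ (writing $\theta:=\theta(\bw,\bw^*)$). Components of $z$ along $\bu_j$ for $j\ge 3$ are mean-zero and independent of the event $\{z^T\bw>0,\,z^T\bw^*>0\}$, so they drop out. The remaining 2D integral, in polar coordinates $(r,\alpha)$, has radial factor $\int_0^\infty r^2 e^{-r^2/2}\,dr=\sqrt{\pi/2}$ and angular region $\alpha\in(\theta-\pi/2,\pi/2)$, producing components $(1+\cos\theta)/(2\sqrt{2\pi})$ along $\bu_1$ and $\sin\theta/(2\sqrt{2\pi})$ along $\bu_2$. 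The half-angle identities $1+\cos\theta=2\cos^2(\theta/2)$ and $\sin\theta=2\sin(\theta/2)\cos(\theta/2)$ factor out $\cos(\theta/2)$ and identify the remaining unit vector $\cos(\theta/2)\bu_1+\sin(\theta/2)\bu_2$ with $(\hat{\bw}+\bw^*)/\|\hat{\bw}+\bw^*\|$, since $\|\hat{\bw}+\bw^*\|=2\cos(\theta/2)$. Collecting everything with the $k\sqrt{2/\pi}$ prefactor yields exactly (\ref{coarse gradient}).

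The main technical obstacle is the polar-coordinate evaluation of $\mE[z\,\sigma(z^T\bw)\sigma(z^T\bw^*)]$ together with the trigonometric reassembly that reveals the angle-bisector direction $(\hat{\bw}+\bw^*)/\|\hat{\bw}+\bw^*\|$ inside $\mathrm{span}(\bw,\bw^*)$. Setting up the right orthonormal frame and keeping careful track of signs in the half-angle identities is the delicate point; everything else reduces to standard one-dimensional Gaussian moments, independence arguments, and the chain rule for $\arccos$.
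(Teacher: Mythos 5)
Your proposal is correct, and it checks out in detail: the orthant probability $\mP(z^T\bw>0,\,z^T\bw^*>0)=(\pi-\theta)/(2\pi)$ gives $f(\bw)=k\,\theta(\bw,\bw^*)/(2\pi)$, which is consistent with the closed form of $f$ the paper later uses inside the proof of Lemma \ref{properties of coarse gradient}, and differentiating $\arccos$ reproduces \eqref{population loss gradient} exactly; the polar-coordinate integrals $\frac{1}{2\pi}\sqrt{\pi/2}\,(1+\cos\theta)$ and $\frac{1}{2\pi}\sqrt{\pi/2}\,\sin\theta$ combined with $\|\hat{\bw}+\bw^*\|=2\cos(\theta/2)$ and the prefactor $k\sqrt{2/\pi}\cdot\frac{1}{\sqrt{2\pi}}=\frac{k}{\pi}$ yield \eqref{coarse gradient}. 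The one structural difference is that the paper does not prove this lemma at all: it states that Lemmas \ref{gradient lemma} and \ref{properties of true gradient} ``follow directly from'' the reference \cite{YinBlend}, where the analogous Gaussian computations are carried out. So your argument is not an alternative to the paper's proof so much as a self-contained reconstruction of the cited one; it buys the reader independence from the external reference at the cost of a page of Gaussian integration. Two minor points worth making explicit if you write it up: state that $\mu'(\Z\bw)$ acts diagonally so that $\mE_\Z[g]$ really is $k\sqrt{2/\pi}$ times a single-row expectation, and note that all the indicator events depend on $\bw$ only through $\hat{\bw}$, which is why \eqref{coarse gradient} holds for arbitrary $\|\bw\|\ne 0$ and not just unit $\bw$.
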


\begin{lem}\label{properties of true gradient}
(Properties of true gradient)\\
Given $\bw_1,\bw_2$ with $\min\{\|\bw_1\|,\|\bw_2\|\} = c >0$ and $\max\{\|\bw_1\|,\|\bw_2\|\} = C$, there exists a constant $L_f>0$ depends on $c$ and $C$ such that
\begin{equation*}
\|\nabla f(\bw_1) - \nabla f(\bw_2)\| \leq L_f\|\bw_1-\bw_2\|
\end{equation*}
Moreover, we have
\begin{equation*}
f(\bw_2) \leq f(\bw_1) + \langle \nabla f(\bw_1), \bw_2-\bw_1 \rangle + \frac{L_f}{2} \|\bw_2-\bw_1\|^2.
\end{equation*}
\end{lem}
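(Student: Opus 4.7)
The plan is to work directly with the explicit gradient formula from Lemma \ref{gradient lemma} and decompose $\nabla f$ into a scalar factor times a unit-vector factor, show each is Lipschitz on the annulus $\{c\leq\|\bw\|\leq C\}$, and then derive the descent inequality by integrating the gradient along the segment joining $\bw_1$ and $\bw_2$.

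First, I would write $\nabla f(\bw) = -\frac{k}{2\pi}\cdot\frac{1}{\|\bw\|}\cdot u(\bw)$, where $u(\bw) := P_\bw\bw^*/\|P_\bw\bw^*\|$ is a unit vector and $P_\bw := \mat{I}-\bw\bw^T/\|\bw\|^2$ is the projection orthogonal to $\bw$. The product-triangle inequality gives
\[
\|\nabla f(\bw_1)-\nabla f(\bw_2)\| \leq \tfrac{k}{2\pi}\left[\left|\tfrac{1}{\|\bw_1\|}-\tfrac{1}{\|\bw_2\|}\right| + \tfrac{1}{\|\bw_2\|}\|u(\bw_1)-u(\bw_2)\|\right].
\]
The first term is bounded by $c^{-2}\|\bw_1-\bw_2\|$. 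For the second, set $v(\bw) := P_\bw\bw^* = \bw^* - \langle \bw^*,\bw\rangle\,\bw/\|\bw\|^2$, observe that $v$ is smooth with uniformly bounded Jacobian on $\{\|\bw\|\geq c\}$ (as a rational function in $\bw$ with denominator $\|\bw\|^2\geq c^2$), and apply the standard fact that $v\mapsto v/\|v\|$ is Lipschitz on sets bounded away from the origin. Combining these yields a constant $L_f=L_f(c,C)$.

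For the descent inequality, I would invoke the classical one-variable integration trick: with $\bw_s := \bw_1+s(\bw_2-\bw_1)$,
\[
f(\bw_2)-f(\bw_1) = \langle \nabla f(\bw_1),\bw_2-\bw_1\rangle + \int_0^1 \langle \nabla f(\bw_s)-\nabla f(\bw_1),\bw_2-\bw_1\rangle\,ds,
\]
then bound the integrand by $L_f\, s\, \|\bw_2-\bw_1\|^2$ via Cauchy--Schwarz and the Lipschitz bound just proved, and integrate in $s$ to produce the $\tfrac{L_f}{2}\|\bw_2-\bw_1\|^2$ remainder.

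The main obstacle is that the Lipschitz constant for $u$ naively picks up a factor $\|P_\bw\bw^*\|^{-1} = (\sin\theta(\bw,\bw^*))^{-1}$, which blows up when $\bw$ aligns with $\pm\bw^*$; hence strictly speaking $L_f$ must also absorb an upper bound on $1/\sin\theta$ over the working region. In the intended application to Theorem \ref{Main result} this is harmless: the initialization condition $\theta(\bw^0,\bw^*)\leq\pi-\delta$ together with the angle-monotonicity argument keeps the iterates $\bw^t$ in a cone where $\theta$ stays bounded away from $0$ and $\pi$, so one restricts attention to that cone when quoting $L_f$. A secondary technical point is that the segment $\{\bw_s\}$ must remain bounded away from the origin for the integration to make sense; in the algorithmic context this is guaranteed by $\|\bw^t\|=1$ together with the step-size condition $\eta\|\mE_{\Z}[g(\bw^t,\Z)]+\beta(\bw^t-\bu^{t+1})\|\leq 1/2$, which keeps the pre-normalization iterate $\hat{\bw}^{t+1}$ in $\{\|\bw\|\geq 1/2\}$.
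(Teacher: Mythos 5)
Your proposal supplies an actual argument where the paper offers none: the paper disposes of this lemma with the single line that Lemmas \ref{gradient lemma} and \ref{properties of true gradient} ``follow directly from \cite{YinBlend}'', so there is no in-paper proof to compare against, and your factorization of $\nabla f(\bw)$ into the scalar $-\tfrac{k}{2\pi\|\bw\|}$ times the unit vector $u(\bw)$, followed by the integral form of the first-order remainder, is the natural way to fill that citation in. The one substantive point is the obstacle you flag yourself, and it deserves to be stated more forcefully: it is not a removable technicality but shows the lemma is false as literally stated. Since $f$ depends on $\bw$ only through $\theta(\bw,\bw^*)$ (indeed $f(\bw_2)-f(\bw_1)=\tfrac{k}{2\pi}(\theta_2-\theta_1)$ by the computation inside the proof of Lemma \ref{properties of coarse gradient}), the gradient has constant magnitude $\tfrac{k}{2\pi\|\bw\|}$ but a direction $u(\bw)$ with no limit as $\bw$ approaches the ray through $\pm\bw^*$; taking $\bw_1,\bw_2$ on opposite sides of that ray gives $\|\nabla f(\bw_1)-\nabla f(\bw_2)\|\approx\tfrac{k}{\pi\|\bw\|}$ while $\|\bw_1-\bw_2\|$ is arbitrarily small, so no $L_f$ depending only on $c$ and $C$ can exist. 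Your fix --- letting $L_f$ also absorb a positive lower bound on $\sin\theta(\bw,\bw^*)$ over the region actually visited --- is the right one, and it is harmless here both for the reason you give (angle descent confines the iterates to a cone) and because the paper never actually invokes Lemma \ref{properties of true gradient} in its proofs: all the descent estimates run through the coarse-gradient analogue, Lemma \ref{properties of coarse gradient}, which restricts to the unit sphere and to $\theta_2\leq\theta_1$ precisely to sidestep this degeneracy. The same caveat carries over to your integration step: the segment $\bw_s$ must avoid not only the origin but also the bad cone for the pointwise Lipschitz bound on the integrand to apply, although the identity $f(\bw_2)-f(\bw_1)=\int_0^1\langle\nabla f(\bw_s),\bw_2-\bw_1\rangle\,ds$ itself survives because $f$ is globally Lipschitz on the annulus and the exceptional set is negligible on a generic segment.
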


Lemmas \ref{gradient lemma}, \ref{properties of true gradient} follow directly from \cite{YinBlend}.

\begin{lem}\label{properties of coarse gradient}
(Properties of expected coarse gradient)\\
If $\bw_1,\bw_2$ satisfy $\|\bw_1\|, \|\bw_2\|=1$, and $\theta(\bw_1,\bw^*),\theta(\bw_2,\bw^*) \in (0,\pi)$ with $\theta(\bw_2,\bw^*) \leq \theta(\bw_1,\bw^*)$, then there exists a constant $K = \frac{k}{2\pi}$ such that
\begin{equation}
\| \mE_{\Z}[g(\bw_1,\Z)] - \mE_{\Z}[g(\bw_2,\Z)] \| \leq K \|\bw_1-\bw_2\|
\end{equation}
Moreover, there exists a constant $L = \frac{k}{4\pi}$ such that
\begin{align*}
&f(\bw_2) - f(\bw_1)\\
\leq& \langle \nabla \mE_{\Z}[g(\bw_1,\Z)], \bw_2-\bw_1 \rangle + \frac{L}{2} \|\bw_2-\bw_1\|^2.
\stepcounter{equation}\tag{\theequation}\label{descent condition}
\end{align*}
\end{lem}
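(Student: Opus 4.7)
The plan hinges on a dramatic simplification of formula \eqref{coarse gradient} when $\|\bw\|=1$. In that case $\bw/\|\bw\|=\bw$, and I would use the identity $\|\bw+\bw^*\|^2 = 2+2\cos\theta = 4\cos^2(\theta/2)$, valid for $\theta := \theta(\bw,\bw^*) \in (0,\pi)$ where $\cos(\theta/2) > 0$. The resulting cancellation $\cos(\theta/2)/\|\bw+\bw^*\| = 1/2$ collapses the bracket in \eqref{coarse gradient} to $\bw - (\bw+\bw^*)/2 = (\bw-\bw^*)/2$, so
\[
\mE_\Z[g(\bw,\Z)] = \frac{k}{2\pi}(\bw - \bw^*)
\]
on the unit sphere. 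The Lipschitz bound is then immediate: $\|\mE_\Z[g(\bw_1,\Z)] - \mE_\Z[g(\bw_2,\Z)]\| = \frac{k}{2\pi}\|\bw_1 - \bw_2\|$, with equality, giving $K = \frac{k}{2\pi}$.

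For the descent inequality I would exploit the scale-invariance of $f$. Integrating the gradient from Lemma \ref{gradient lemma} along great circles, or equivalently evaluating $f$ directly via the Gaussian orthant probability $\mP(\bz^T\bw>0,\bz^T\bw^*>0) = \frac{\pi-\theta}{2\pi}$ applied to each of the $k$ independent rows of $\Z$, yields the closed form $f(\bw) = \frac{k}{2\pi}\,\theta(\bw,\bw^*)$ on the unit sphere. Substituting this and the simplified coarse gradient, writing $\theta_1,\theta_2,\tilde\theta$ for the three relevant angles and using $\|\bw_2-\bw_1\|^2 = 2(1-\cos\tilde\theta)$, the claim reduces --- after clearing a common factor $\frac{k}{2\pi}$ --- to the trigonometric statement
\[
\phi(\theta_2)-\phi(\theta_1) + (1-\cos\tilde\theta) \leq \tfrac{2\pi L}{k}(1-\cos\tilde\theta)
\]
with $\phi(\theta) := \theta + \cos\theta$.

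Since $\phi'(\theta) = 1-\sin\theta \geq 0$ on $[0,\pi]$, the hypothesis $\theta_2 \leq \theta_1$ gives $\phi(\theta_2)-\phi(\theta_1) \leq 0$; combined with the residual $1-\cos\tilde\theta$, the inequality then holds once $L$ is chosen large enough (a crude first-order bound already yields $L = \frac{k}{2\pi}$). Sharpening to the stated $L = \frac{k}{4\pi}$ requires absorbing a fixed fraction of $1-\cos\tilde\theta$ via the integral form $\phi(\theta_1)-\phi(\theta_2) = \int_{\theta_2}^{\theta_1}(1-\sin s)\, ds$, combined with the spherical triangle bound $|\theta_1-\theta_2|\leq \tilde\theta$ and the Taylor expansion $1-\cos\tilde\theta = \tilde\theta^2/2 + O(\tilde\theta^4)$.

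The main obstacle is precisely this last step: the reduced inequality is tightest when $\theta_1,\theta_2$ both cluster near $\pi/2$, the unique point in $[0,\pi]$ where $\phi'$ vanishes and the monotonicity margin of $\phi$ collapses. Extracting the sharp constant there demands a careful second-order analysis of $\phi$ near $\pi/2$, balanced against how large $1-\cos\tilde\theta$ can be for a given angular gap $|\theta_1-\theta_2|$. Outside this critical regime a first-order argument already suffices, so the proof naturally splits into a generic case and a localized near-$\pi/2$ case where the Taylor refinement is deployed.
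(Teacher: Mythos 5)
Your first part and your reduction of the descent inequality are exactly the paper's argument: the rhombus identity $\cos(\theta/2)=\tfrac{1}{2}\|\bw+\bw^*\|$ collapses the coarse gradient to $\tfrac{k}{2\pi}(\bw-\bw^*)$ on the unit sphere (giving $K=\tfrac{k}{2\pi}$ with equality), the closed form $f(\bw)=\tfrac{k}{2\pi}\theta$ gives $f(\bw_2)-f(\bw_1)=\tfrac{k}{2\pi}(\theta_2-\theta_1)$, and after the cancellation $\langle\bw_1+\bw_2,\bw_2-\bw_1\rangle=0$ everything reduces to the monotonicity of $\phi(\theta)=\theta+\cos\theta$ on $[0,\pi]$. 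Up to and including your ``crude first-order bound'' the proposal is correct and coincides with the paper: the coefficient actually established in front of $\|\bw_2-\bw_1\|^2$ is $\tfrac{k}{4\pi}$ (the paper's proof writes this as $L\|\bw_2-\bw_1\|^2$ with $L=\tfrac{k}{4\pi}$, whereas the lemma statement carries a factor $\tfrac{L}{2}$ --- an internal inconsistency of the paper, not a defect of your reduction).

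The genuine gap is your final sharpening step. In your normalization, reaching $L=\tfrac{k}{4\pi}$ under the $\tfrac{L}{2}$ convention amounts to proving $\phi(\theta_1)-\phi(\theta_2)\geq\tfrac{1}{2}(1-\cos\tilde\theta)=\tfrac{1}{4}\|\bw_2-\bw_1\|^2$, and this is simply false: the hypotheses permit $\theta_1=\theta_2$ with $\bw_1\neq\bw_2$ (rotate $\bw_2$ about $\bw^*$ when $d\geq 3$, or reflect across $\bw^*$ when $d=2$), in which case the left side is $0$ while the right side is strictly positive. No second-order analysis near $\pi/2$ can rescue this; the critical configuration is not $\theta_1,\theta_2$ clustering at $\pi/2$ but $\theta_1=\theta_2$ with $\tilde\theta>0$, where the monotonicity margin of $\phi$ vanishes identically while $\|\bw_2-\bw_1\|$ does not. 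You should stop at the first-order bound: it already yields the descent inequality with coefficient $\tfrac{k}{4\pi}$ on $\|\bw_2-\bw_1\|^2$, which is all the paper proves and all that the subsequent Lagrangian-descent lemma uses.
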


\begin{remark}
Here we show the coarse gradient satisfies the descent condition under very specific conditions. As shown in Lemma \ref{angle decreases lemma}, we have $\theta^{t+1}\leq\theta^t$; and the normalization of our RVSCGD model guarantees $\|\bw^{t+1}\|=\|\bw^t\|=1$. Thus the result of Lemma \ref{properties of coarse gradient} is sufficient for our proof.
\end{remark}

\begin{proof}
First suppose $\|\bw_1\| = \|\bw_2\| = 1$. By Lemma 5.3 of \cite{YinBlend}, we have
\begin{align*}
&\mE_{\Z}[g(\bw_j,\Z)] \\
=& \frac{k}{\pi}\left[\bw_j - \cos\left(\frac{\theta(\bw_j,\bw^*)}{2}\right)
\frac{\bw_j+\bw^*}{\left\|\bw_j+\bw^*\right\|} \right]
\end{align*}
for $j=1,2$. Consider the plane formed by $\bw_j$ and $\bw^*$, since $\|\bw^*\| = 1$, we have an equilateral triangle formed by $\bw_j$ and $\bw^*$ (See Fig. \ref{rhombus}).\\

\begin{figure}
	\centering
	\includegraphics[scale = 0.4]{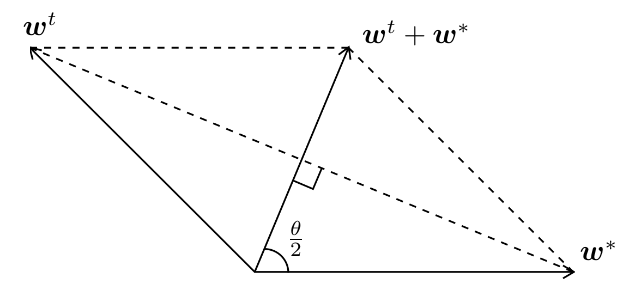}
	\caption{Geometry of $\bw^t$ and $\bw^*$ when $\|\bw^t\|=\|\bw^*\|=1$.}
	\label{rhombus}
\end{figure}

Simple geometry shows
\[ \cos\left(\frac{\theta(\bw_j,\bw^*)}{2}\right)
= \frac{\frac{1}{2}\|\bw_j+\bw^*\|}{\|\bw^*\|} = \frac{1}{2}\|\bw_j+\bw^*\| \]
Thus the expected coarse gradient simplifies to
\begin{align*}
\mE_{\Z}[g(\bw_j,\Z)] &= \frac{k}{\pi}\left[\bw_j - \frac{\bw_j+\bw^*}{2} \right]\\
&= \frac{k}{2\pi}\bw_j - \frac{k}{2\pi}\bw^*
\stepcounter{equation}\tag{\theequation}\label{coarse gradient formula}
\end{align*}
which implies
\begin{equation}\label{lipschitz coarse gradient}
\| \mE_{\Z}[g(\bw_1,\Z)] - \mE_{\Z}[g(\bw_2,\Z)] \| \leq K \|\bw_1-\bw_2\|
\end{equation}
with $K = \frac{k}{2\pi}$. The first claim is proved.\\

It remains to show the gradient descent inequality. By \cite{YinBlend}, we have
\[ f(\bw) = \frac{1}{8}\left[ \mat{1}^T(I + \mat{11}^T)\mat{1}-2\mat{1}^T\left(
\left(1-\frac{2}{\pi}\theta(\bw,\bw^*)\right)I+\mat{11}^T \right)\mat{1} + \mat{1}^T(I + \mat{11}^T)\mat{1} \right] \]
Let $\theta_1 = \theta(\bw_1,\bw^*), \theta_2 = \theta(\bw_2,\bw^*)$. Then
\begin{align*}
f(\bw_2)-f(\bw_1) &= \frac{1}{4}\left[-\mat{1}^T\left(
\left(1-\frac{2}{\pi}\theta_2\right)I+\mat{11}^T \right)\mat{1}
+\mat{1}^T\left(
\left(1-\frac{2}{\pi}\theta_1\right)I+\mat{11}^T \right)\mat{1}  \right]\\
&= \frac{1}{4} \left[ \mat{1}^T\left( \left(\frac{2}{\pi}\theta_2 - \frac{2}{\pi}\theta_1\right)I\right)\mat{1} \right]\\
&= \frac{k}{2\pi} (\theta_2-\theta_1)
\end{align*}
We will show $f(\bw_2)-f(\bw_1) \leq \langle \mE_{\Z}[g(\bw_1,\Z)],\bw_2-\bw_1 \rangle + L\|\bw_2-\bw_1\|^2$ for $\|\bw_1\|=\|\bw_2\|=1$ and $\theta_2 \leq \theta_1$. By equation \eqref{coarse gradient formula},
\[\mE_{\Z}[g(\bw_1,\Z)] = \frac{k}{2\pi} \left(\bw_1-\bw^*\right) \] 
It remains to show
\[ \frac{k}{2\pi} (\theta_2-\theta_1) \leq \left\langle \frac{k}{2\pi} \left(\bw_1-\bw^*\right), \bw_2-\bw_1 \right\rangle + L\|\bw_2-\bw_1\|^2 \]
or there exists a constant $K_1$ such that
\[ \theta_2-\theta_1 \leq \langle \bw_1-\bw^*,\bw_2-\bw_1 \rangle + K_1\|\bw_2-\bw_1\|^2 \]
Notice that by writing $K_1 = \frac{1}{2}+K_2$, we have
\begin{align*}
&\langle \bw_1-\bw^*,\bw_2-\bw_1 \rangle + K_1\|\bw_2-\bw_1\|^2\\
&= \langle \bw_1-\bw^*,\bw_2-\bw_1 \rangle + K_1\langle\bw_2-\bw_1,\bw_2-\bw_1\rangle\\
&= \langle \bw_1-\bw^*,\bw_2-\bw_1 \rangle + \frac{1}{2} \langle\bw_2-\bw_1,\bw_2-\bw_1\rangle + K_2\|\bw_2-\bw_1\|^2\\
&= \langle \frac{1}{2}\bw_1+\frac{1}{2}\bw_2-\bw^*, \bw_2-\bw_1 \rangle + K_2 \|\bw_2-\bw_1\|^2\\
&= \langle -\bw^*, \bw_2-\bw_1 \rangle + \frac{1}{2}\langle \bw_1+\bw_2,\bw_2-\bw_1\rangle + K_2\|\bw_2-\bw_1\|^2\\
&= \langle -\bw^*, \bw_2-\bw_1 \rangle + K_2 \|\bw_2-\bw_1\|^2
\end{align*}
where the last equality follows since $\|\bw_1\|=\|\bw_2\|=1$ implies $\langle \bw_1+\bw_2,\bw_2-\bw_1\rangle=0$. On the other hand,
\[ \langle -\bw^*,\bw_2-\bw_1 \rangle  = -\|\bw^*\|\|\bw_2\|\cos\theta_2 + \|\bw^*\|\|\bw_1\|\cos\theta_1
= \cos\theta_1-\cos\theta_2 \]
so it suffices to show there exists a constant $K_2$ such that
\[ \theta_2+\cos\theta_2 - \theta_1 - \cos\theta_1 \leq K_2\|\bw_2-\bw_1\|^2 \]
Notice the function $\theta \mapsto \theta+\cos\theta$ is monotonically increasing on $[0,\pi]$. For $\theta_1,\theta_2 \in [0,\pi]$ with $\theta_2\leq\theta_1$, the LHS is non-positive, and the inequality holds. Thus one can take $K_2=0, K_1 = \frac{1}{2}$, and $L=\frac{k}{4\pi}$.
\end{proof}

\begin{lem}\label{angle decreases lemma}
(Angle Descent)\\
Let $\theta^t := \theta(\bw^t,\bw^*)$. If the initialization of the RVSCGD algorithm satisfies 
\[\theta^0 \leq \pi-\delta, \quad \text{and} \quad \beta \leq \frac{k\sin\delta}{2\pi},\] 
then
\begin{equation}
\theta^{t+1} \leq \theta^t.
\end{equation}
\end{lem}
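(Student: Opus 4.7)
The plan is proof by induction on $t$, working with the pre-normalization iterate $\hat{\bw}^{t+1}$ since normalization is angle-preserving, so $\theta^{t+1}=\theta(\hat{\bw}^{t+1},\bw^*)$. Because the algorithm enforces $\|\bw^t\|=1$, Lemma~\ref{gradient lemma} specializes the expected coarse gradient to $\mE_{\Z}[g(\bw^t,\Z)]=\tfrac{k}{2\pi}(\bw^t-\bw^*)$, so with $b:=\eta k/(2\pi)\in(0,1]$ (from $\eta\le 2\pi/k$) the update becomes
\[
\hat{\bw}^{t+1}\;=\;(1-b)\,\bw^t\;+\;b\,\bw^*\;+\;E,\qquad E\;:=\;-\eta\beta(\bw^t-\bu^{t+1}).
\]
If $E=0$, $(1-b)\bw^t+b\bw^*$ is a planar convex combination whose angle to $\bw^*$ is visibly at most $\theta^t$, giving the ``baseline'' descent. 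The whole problem thus reduces to showing the perturbation $E$ cannot spoil this contraction.

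The perturbation admits two complementary bounds for the $\ell_1$ case. Entrywise, soft-thresholding satisfies $|w_i^t-u_i^{t+1}|\le\lambda/\beta$, giving $\|E\|\le\eta\lambda\sqrt{d}$; and since soft-thresholding is contractive, $\|\bu^{t+1}\|\le\|\bw^t\|=1$, giving $\|E\|\le\eta\beta$. Substituting the standing hypotheses $\lambda<k/(2\pi\sqrt{d})$ and $\beta\le k\sin\delta/(2\pi)$, both forms yield $\|E\|\le b\sin\delta$. I would then split $E=E_\Pi+E_\perp$ along and orthogonal to the plane $\Pi:=\operatorname{span}(\bw^t,\bw^*)$: the orthogonal piece $E_\perp$ increases $\|\hat{\bw}^{t+1}\|$ but contributes nothing to $\langle\hat{\bw}^{t+1},\bw^*\rangle$ (since $\bw^*\in\Pi$), so it tilts $\hat{\bw}^{t+1}$ away from $\bw^*$ by only a controlled amount, while the in-plane piece $E_\Pi$ merely shifts the convex-combination term along $\Pi$.

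The heart of the proof is then the scalar inequality $\langle\hat{\bw}^{t+1},\bw^*\rangle^2\ge\cos^2\theta^t\,\|\hat{\bw}^{t+1}\|^2$, which is equivalent to $\cos\theta^{t+1}\ge\cos\theta^t$ for $\theta^t\in[0,\pi/2]$ (the obtuse range $\theta^t\in(\pi/2,\pi-\delta]$ is handled analogously with signs, using $\cos\theta^t\ge-\cos\delta$). Expanding both sides in the decomposition above, the baseline ``pull'' of the coarse gradient contributes a decrease of order $b\sin\theta^t$, while the perturbation contributes a quantity of order $\|E\|$. The bound $\|E\|\le b\sin\delta$ combined with the inductive hypothesis $\theta^t\le\pi-\delta$ supplies the domination, closing the induction; the base case $\theta^0\le\pi-\delta$ is given.

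The main obstacle is the small-$\sin\theta^t$ regime, where the pull $b\sin\theta^t$ is not automatically larger than the perturbation $\|E\|$. In that regime one must use a finer second-order Taylor analysis of $\cos\theta^{t+1}-\cos\theta^t$ in $\eta$; the step-size bound $\eta\|\mE_{\Z}[g(\bw^t,\Z)]+\beta(\bw^t-\bu^{t+1})\|\le 1/2$ keeps the higher-order remainders controllable, and the fact that when $\sin\theta^t$ is small both $\mE_{\Z}[g(\bw^t,\Z)]$ and the ``angular budget'' of $E_\perp$ are simultaneously small is what makes the estimate work out. Corollary~\ref{Corollary} for $\ell_0$ and T$\ell_1$ follows verbatim because their proximal operators satisfy the same pointwise threshold bound on $|w^t_i-u^{t+1}_i|$.
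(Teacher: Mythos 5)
Your reduction of the update to $\hat{\bw}^{t+1}=(1-b)\,\bw^t+b\,\bw^*+E$ with $b=\eta k/(2\pi)$ and $E=-\eta\beta(\bw^t-\bu^{t+1})$, together with the bound $\|E\|\le\eta\beta\|\bw^t-\bu^{t+1}\|\le\eta\beta\le b\sin\delta$, is sound and parallels the paper's computation. (Minor slip: your first bound only gives $\|E\|\le\eta\lambda\sqrt{d}<b$, not $b\sin\delta$; only the $\beta$-based bound delivers the $\sin\delta$ factor.) The genuine gap is exactly where you flag it, and your proposed repair does not work. You claim that when $\sin\theta^t$ is small, ``both $\mE_{\Z}[g(\bw^t,\Z)]$ and the angular budget of $E_\perp$ are simultaneously small.'' The first half is true, since $\|\mE_{\Z}[g(\bw^t,\Z)]\|=\frac{k}{2\pi}\|\bw^t-\bw^*\|=\frac{k}{\pi}\sin(\theta^t/2)$; the second half is false. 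The vector $E=-\eta\beta\bigl(\bw^t-S_{\lambda/\beta}(\bw^t)\bigr)$ depends only on the componentwise thresholding residual of $\bw^t$ and has no reason to shrink as $\theta^t\to 0$. Concretely, if $\bw^t$ is already very close to $\bw^*$ and the residual $\bw^t-S_{\lambda/\beta}(\bw^t)$ is not parallel to $\bw^t$ (the generic case unless all nonzero entries of $\bw^t$ have equal magnitude), then $\hat{\bw}^{t+1}$ acquires an off-$\bw^*$ component of size of order $\eta\beta$ that the pull of order $\eta\frac{k}{2\pi}\sin\theta^t$ cannot cancel, and $\theta^{t+1}>\theta^t$. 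No second-order Taylor expansion rescues this: the paper's own fixed-point relation $\frac{k}{2\pi}\sin\theta=\beta\|\bar{\bu}\|\sin\gamma$ in \eqref{Sine Relation} shows that the equilibrium angle sits precisely at the scale where your domination argument breaks down, so strict monotonicity below that scale is not obtainable by this route.

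You should also be aware that the paper's own proof does not attempt monotone descent in that regime. After the same algebra it switches to a worst-case geometric argument: since $\bu^{t+1}=S_{\lambda/\beta}(\bw^t)$ gives $\theta(\bu^{t+1},\bw^t)\le\pi/2$ and $\|\bu^{t+1}\|\le 1$, the hypothesis $\beta\le\frac{k\sin\delta}{2\pi}$ guarantees that $\frac{k}{2\pi}\bw^*+\beta\bu^{t+1}$ is tilted away from $\bw^*$ by an angle at most $\delta$, and the proof then only establishes the cone invariance ``$\theta^t\le\pi-\delta$ implies $\theta^{t+1}\le\pi-\delta$,'' which is what Lemma \ref{properties of limit point} actually consumes. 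Your $\|E\|\le b\sin\delta$ bound proves the same invariance directly: the perturbed target $b\bw^*+E$ makes angle at most $\delta$ with $\bw^*$, and a positive planar combination of it with $(1-b)\bw^t$ makes angle at most $\max(\theta^t,\delta)\le\pi-\delta$ with $\bw^*$. That weaker invariance, not the literal inequality $\theta^{t+1}\le\theta^t$, is the statement your argument (and the paper's) can support.
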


\begin{proof}
Due to normalization in the RVSCGD algorithm, $\|\bw^t\| = 1$ for all $t$. By equation \eqref{coarse gradient formula}, we have
\begin{equation*}
\bw^t - \eta\mE_{\Z}[g(\bw^t,\Z)] = \left(1-\eta\frac{k}{2\pi}\right) \bw^t + \eta\frac{k}{2\pi}\bw^*
\end{equation*}
and the update of $\bu$ is the well-known soft-thresholding of $\bw$  \cite{Donoho,Daubechies}:
\begin{equation*}
\bu^{t+1} = \arg\min_{\bu} \mL_\beta(\bu,\bw^t) = S_{\lambda/\beta}(\bw^t)
\end{equation*}
where $S_{\lambda/\beta}(\cdot)$ is the soft-thresholding operator:
\begin{equation*}
S_{\lambda/\beta}(x) = \begin{cases}
x - \lambda/\beta, & x > \lambda/\beta\\
0, & |x| \leq \lambda/\beta\\
x + \lambda/\beta, & x < -\lambda/\beta
\end{cases}
\end{equation*}
and $S_{\lambda/\beta}(\bw)$ applies the thresholding to each component of $\bw$. Then the update of $\bw$ has the form
\begin{equation*}
\bw^{t+1} = C^t\bw^t + \eta\frac{k}{2\pi}\bw^* + \eta\beta\bu^{t+1}
\end{equation*}
for some constant $C^t>0$. Suppose the initialization satisfies $\theta(\bw^0,\bw^*) \leq \pi-\delta$, for some $\delta > 0$. It suffices to show that if $\theta^t \leq \pi-\delta$, then $\theta^{t+1} \leq \pi-\delta$. To this end, since $\bu^{t+1} = S_{\lambda/\beta}(\bw^t)$, we have $\theta(\bw^t,\bu^{t+1}) \leq \frac{\pi}{2}$. Consider the worst case scenario: 
 $\bw^t,\bw^*,\bu^{t+1}$ are co-planar with $\theta(\bu^{t+1},\bw^t) = \frac{\pi}{2}$, and $\bw^*,\bu^{t+1}$ are on two sides of $\bw^t$ (See Fig. \ref{worst case}). \\
\begin{figure}
	\centering
	\includegraphics[scale = 0.5]{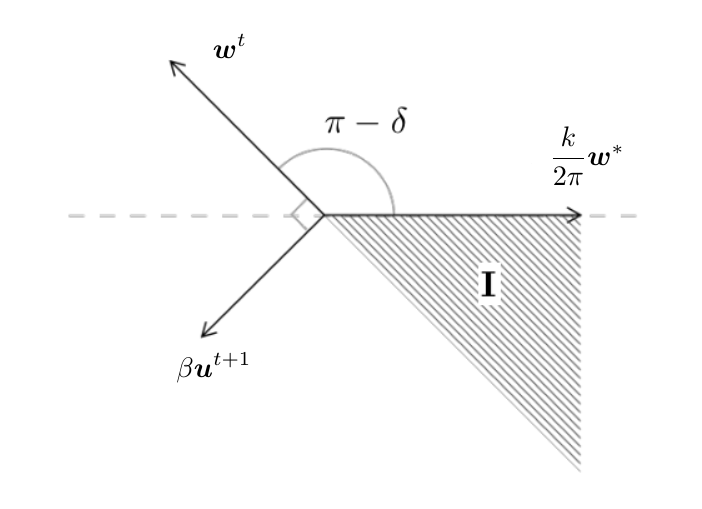}
	\caption{Worst case of the update on $w^t$}
	\label{worst case}
\end{figure}
We need $\frac{k}{2\pi} \bw^* + \beta \bu^{t+1}$ to be in region I. This condition is satisfied when $\beta$ is small such that
\[ \sin\delta \geq \frac{\beta\|\bu^{t+1}\|}{\frac{k}{2\pi}\|\bw^*\|}
= \frac{2\pi\beta\|\bu^{t+1}\|}{k} \]
or
\[ \beta \leq \frac{k\sin\delta}{2\pi\|\bu^{t+1}\|} \]
Since $\|u^{t+1}\|\leq 1$, it suffices to have $ \beta \leq \frac{k\sin\delta}{2\pi}$.
\end{proof}
\begin{lem}\label{lagrangian decreases lemma}
(Lagrangian Descent)\\
If the initialization of the RVSCGD algorithm satisfies 
$\eta \leq \frac{1}{\beta+L}$, 
where $L$ is the Lipschitz constant in Lemma \ref{properties of coarse gradient}, then
\begin{equation}\label{lagrangian decreases}
\mL_\beta(\bu^{t+1},\bw^{t+1}) \leq \mL_\beta(\bu^t,\bw^t).
\end{equation}
\end{lem}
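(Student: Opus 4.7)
\textbf{Proof proposal for Lemma \ref{lagrangian decreases lemma}.}
The plan is the standard two-step sufficient descent argument for alternating minimization, split across the $\bu$-update and the $\bw$-update.
First, since $\bu^{t+1}$ is by construction the minimizer of $\bu \mapsto \mL_\beta(\bu, \bw^t)$, the $\bu$-step is trivial:
$\mL_\beta(\bu^{t+1}, \bw^t) \leq \mL_\beta(\bu^t, \bw^t)$.
The substance of the lemma is the $\bw$-step inequality $\mL_\beta(\bu^{t+1}, \bw^{t+1}) \leq \mL_\beta(\bu^{t+1}, \bw^t)$. To prove it, I would first note that $P(\bu^{t+1})$ is the same on both sides and cancels, and write the difference as
\[
f(\bw^{t+1}) - f(\bw^t) + \tfrac{\beta}{2}\bigl(\|\bw^{t+1}-\bu^{t+1}\|^2 - \|\bw^t-\bu^{t+1}\|^2\bigr).
\]
The quadratic identity gives the last piece as $\beta\langle \bw^t - \bu^{t+1}, \bw^{t+1}-\bw^t\rangle + \tfrac{\beta}{2}\|\bw^{t+1}-\bw^t\|^2$, while Lemma \ref{properties of coarse gradient} bounds $f(\bw^{t+1}) - f(\bw^t)$ above by $\langle \mE_{\Z}[g(\bw^t,\Z)], \bw^{t+1}-\bw^t\rangle + \tfrac{L}{2}\|\bw^{t+1}-\bw^t\|^2$; this invocation is legitimate because the normalization keeps $\|\bw^{t+1}\|=\|\bw^t\|=1$ and Lemma \ref{angle decreases lemma} guarantees $\theta^{t+1}\le \theta^t$. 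Adding and letting $v := \mE_{\Z}[g(\bw^t,\Z)] + \beta(\bw^t - \bu^{t+1})$ reduces everything to showing
\[
\langle v, \bw^{t+1}-\bw^t\rangle + \tfrac{L+\beta}{2}\|\bw^{t+1}-\bw^t\|^2 \le 0.
\]

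The main obstacle, and the only step that is not bookkeeping, is handling the normalization $\bw^{t+1} = (\bw^t-\eta v)/\|\bw^t-\eta v\|$ in the first inner product. The trick is to set $\alpha := \|\bw^t - \eta v\|$, so that $\eta v = \bw^t - \alpha \bw^{t+1}$, and expand
\[
\langle v, \bw^{t+1}-\bw^t\rangle = \tfrac{1}{\eta}\langle \bw^t - \alpha \bw^{t+1}, \bw^{t+1}-\bw^t\rangle = -\tfrac{1+\alpha}{2\eta}\|\bw^{t+1}-\bw^t\|^2,
\]
using $\|\bw^t\|=\|\bw^{t+1}\|=1$ and the polarization $\|\bw^{t+1}-\bw^t\|^2 = 2 - 2\langle \bw^{t+1},\bw^t\rangle$. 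Substituting yields the compact bound
\[
\mL_\beta(\bu^{t+1},\bw^{t+1}) - \mL_\beta(\bu^{t+1},\bw^t) \le \tfrac{1}{2}\|\bw^{t+1}-\bw^t\|^2\Bigl(L+\beta - \tfrac{1+\alpha}{\eta}\Bigr).
\]

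To finish, I would invoke the standing hypothesis $\eta \|v\| \le 1/2$ from Theorem \ref{Main result}, which forces $\alpha \ge 1 - \eta\|v\| \ge 1/2$ and hence $1+\alpha \ge 1$. Combined with $\eta \le 1/(\beta+L)$, this gives $\eta(\beta+L) \le 1 \le 1+\alpha$, so the bracketed quantity is non-positive, proving $\mL_\beta(\bu^{t+1},\bw^{t+1}) \le \mL_\beta(\bu^{t+1},\bw^t)$. Chaining with the $\bu$-step yields \eqref{lagrangian decreases}. The delicate point is purely the normalization identity for $\langle v, \bw^{t+1}-\bw^t\rangle$; without it, a naive descent lemma would lose a factor and fail to close under the stated step size.
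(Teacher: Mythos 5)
Your proposal is correct and follows the same overall decomposition as the paper: the trivial $\bu$-step, then the $\bw$-step via the coarse-gradient descent inequality of Lemma \ref{properties of coarse gradient} (legitimately invoked thanks to the unit normalization and the angle monotonicity of Lemma \ref{angle decreases lemma}) plus the quadratic expansion of the proximal term. Where you genuinely diverge is in the one nontrivial step, the inner product $\langle v, \bw^{t+1}-\bw^t\rangle$ created by the normalization. The paper writes $\bw^{t+1} = C^t(\bw^t - \eta v)$ with $C^t = 1/\alpha$ and bounds $\frac{1}{\eta}\langle \bw^t - \bw^{t+1}/C^t, \bw^{t+1}-\bw^t\rangle$ by a geometric argument (the chord $\bw^{t+1}-\bw^t$ bisects the angle between $\bw^{t+1}$ and $-\bw^t$), followed by a case analysis on $C^t \geq 1$ versus $\tfrac{2}{3}\leq C^t\leq 1$, yielding the lower bounds $\tfrac{1}{C^t}\|\bw^{t+1}-\bw^t\|^2$ and $\|\bw^{t+1}-\bw^t\|^2$ respectively. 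You instead observe that since $\|\bw^t\|=\|\bw^{t+1}\|=1$, polarization gives the \emph{exact} identity
\[
\langle \bw^t - \alpha\bw^{t+1}, \bw^{t+1}-\bw^t\rangle = (1+\alpha)\bigl(\langle\bw^t,\bw^{t+1}\rangle - 1\bigr) = -\tfrac{1+\alpha}{2}\|\bw^{t+1}-\bw^t\|^2,
\]
which subsumes both of the paper's cases (one checks $\tfrac{1+\alpha}{2}\geq \min\{\alpha,1\}$ for all $\alpha>0$) and removes the case split and the geometric reasoning entirely. Your closing step is also sound: the standing hypothesis $\eta\|v\|\leq\tfrac12$ gives $\alpha\geq\tfrac12$, so $1+\alpha\geq 1\geq\eta(\beta+L)$ and the bracket is non-positive. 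The net effect is a shorter, purely algebraic proof of the same statement; it even shows the step-size condition could be relaxed slightly (to $\eta\leq\tfrac{3/2}{\beta+L}$ given $\alpha\geq\tfrac12$), though the paper's stated hypothesis suffices for both arguments.
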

\begin{proof}
By definition of the update on $\bu$, we have $\mL_\beta(\bu^{t+1},\bw^t) \leq \mL_\beta(\bu^t,\bw^t)$. It remains to show $\mL_\beta(\bu^{t+1},\bw^{t+1}) \leq \mL_\beta(\bu^{t+1},\bw^t)$. First notice that since
\[ \bw^{t+1} = C^t(\bw^t - \eta\mE_{\Z}[g(\bw^t,\Z)] - \eta\beta(\bw^t-\bu^{t+1}))\] 
where $C^t>0$ is the normalizing constant, thus
\[ \mE_{\Z}[g(\bw^t,\Z)] = \frac{1}{\eta}\left(\bw^t-\frac{\bw^{t+1}}{C^t}\right) - \beta(\bw^t-\bu^{t+1}) \]
For a fixed $\bu:=\bu^{t+1}$ we have
\begin{align*}
&\mL_\beta(\bu,\bw^{t+1}) - \mL_\beta(\bu,\bw^t)\\
=& f(\bw^{t+1}) - f(\bw^t) 
+ \frac{\beta}{2}\left(\|\bw^{t+1}-\bu\|^2-\|\bw^t-\bu\|^2\right)\\
\leq& \langle \mE_{\Z}[g(\bw^t,\Z)], \bw^{t+1}-\bw^t \rangle + \frac{L}{2}\|\bw^{t+1}-\bw^t\|^2\\
+& \frac{\beta}{2}\left(\|\bw^{t+1}-\bu\|^2-\|\bw^t-\bu\|^2\right)\\
=&\frac{1}{\eta}\langle \bw^t - \frac{\bw^{t+1}}{C^t}, \bw^{t+1}-\bw^t \rangle
- \beta \langle \bw^t-\bu	, \bw^{t+1}-\bw^t \rangle \\
+& \frac{L}{2}\|\bw^{t+1}-\bw^t\|^2 + \frac{\beta}{2}\left(\|\bw^{t+1}-\bu\|^2-\|\bw^t-\bu\|^2\right)\\
=& \frac{1}{\eta}\langle \bw^t - \frac{\bw^{t+1}}{C^t}, \bw^{t+1}-\bw^t \rangle
+ \left( \frac{L}{2} + \frac{\beta}{2} \right)\|\bw^{t+1}-\bw^t\|^2\\
+& \frac{\beta}{2}\|\bw^{t+1}-\bu\|^2 - \frac{\beta}{2}\|\bw^t-\bu\|^2\\
-& \beta \langle \bw^t-\bu	, \bw^{t+1}-\bw^t \rangle
- \frac{\beta}{2} \|\bw^{t+1}-\bw^t\|^2\\
=& \frac{1}{\eta}\langle \bw^t - \frac{\bw^{t+1}}{C^t}, \bw^{t+1}-\bw^t \rangle
+ \left( \frac{L}{2} + \frac{\beta}{2} \right)\|\bw^{t+1}-\bw^t\|^2
\end{align*}
Since $\|\bw^t\|,\|\bw^{t+1}\| = 1$, we know $(\bw^{t+1}-\bw^t)$ bisects the angle between $\bw^{t+1}$ and $-\bw^t$. The assumption $\|\eta\mE_{\Z}[g(\bw^t,\Z)] + \eta\beta(\bw^t-\bu^{t+1})\| \leq \frac{1}{2}$ guarantees $\frac{2}{3} \leq C^t \leq 2$ and $\theta(-\bw^t,\bw^{t+1}) < \pi$. It follows that $\theta(\bw^{t+1}-\bw^t,\bw^t)$ and $\theta(\bw^{t+1}-\bw^t,\bw^{t+1})$ are strictly less than $\frac{\pi}{2}$. On the other hand, $\left(\frac{\bw^{t+1}}{C^t}-\bw^t\right)$ also lies in the plane bounded by $\bw^{t+1}$ and $-\bw^t$. Therefore 
\[ \theta\left(\frac{\bw^{t+1}}{C^t}-\bw^t, \bw^{t+1}-\bw^t\right) < \frac{\pi}{2}. \]
This implies 
$\langle \frac{\bw^{t+1}}{C^t}-\bw^t, \bw^{t+1}-\bw^t \rangle \geq 0$. Moreover, when $C^t \geq 1$:
\begin{align*}
&\langle \frac{\bw^{t+1}}{C^t}-\bw^t, \bw^{t+1}-\bw^t \rangle \\
=& \langle \frac{\bw^{t+1}}{C^t}-\frac{\bw^t}{C^t}, \bw^{t+1}-\bw^t \rangle
- \langle \frac{C^t-1}{C^t} \bw^t, \bw^{t+1}-\bw^t \rangle \\
\geq& \frac{1}{C^t}\|\bw^{t+1}-\bw^t\|^2
\end{align*}
And when $\frac{2}{3} \leq C^t \leq 1$:
\begin{align*}
&\langle \frac{\bw^{t+1}}{C^t}-\bw^t, \bw^{t+1}-\bw^t \rangle \\
=& \langle {t+1}-\bw^t, \bw^{t+1}-\bw^t \rangle
+ \langle \frac{1-C^t}{C^t} \bw^{t+1}, \bw^{t+1}-\bw^t \rangle \\
\geq& \|\bw^{t+1}-\bw^t\|^2
\end{align*}
Thus we have
\begin{align*}
&\mL_\beta(\bu,\bw^{t+1}) - \mL_\beta(\bu,\bw^t)\\
\leq& \frac{1}{\eta}\langle \bw^t - \frac{\bw^{t+1}}{C^t}, \bw^{t+1}-\bw^t \rangle
+ \left( \frac{L}{2} + \frac{\beta}{2} \right)\|\bw^{t+1}-\bw^t\|^2\\
\leq& \left( \frac{L}{2} + \frac{\beta}{2} - \frac{1}{\eta C^t}
\chi_{\{C^t \geq 1\}} - \frac{1}{\eta} \chi_{\{\frac{2}{3} \leq C^t \leq 1\}} \right)\|\bw^{t+1}-\bw^t\|^2
\end{align*}
Therefore, if $\eta$ is small so that $\eta \leq \frac{2}{C^t(\beta+L)}$ and $\eta \leq \frac{2}{\beta + L}$, the update on $\bw$ will decrease $\mL_\beta$. Since $C^t \leq 2$, the condition is satisfied when $\eta \leq \frac{1}{\beta+L}$.
\end{proof}

\begin{lem}\label{properties of limit point}
(Properties of limit point)\\
If the initialization of the RVSCGD  algorithm satisfies\\
(i) $\theta(\bw^0,\bw^*) \leq \pi-\delta$, for some $\delta>0$\\
(ii) $\lambda$ is small such that $\frac{2\pi}{k}\lambda\sqrt{d} < 1$, $\beta$ is small such that $\beta \leq \frac{k\sin\delta}{2\pi}$\\
(iii) $\eta$ is small such that $\eta\frac{k}{2\pi} < 1$\\
Let $\theta := \theta(\bar{\bw},\bw^*)$ and $\gamma := \theta(\bar{\bu},\bar{\bw})$, then $(\bu^t,\bw^t)$ converges to a critical point $(\bar{\bu},\bar{\bw})$ such that
\[ \theta < \delta \;\;{\rm and }\;\;
 \|\bw^*-\bar{\bw}\| \leq \frac{1}{2}\sin\delta\sin\gamma. \]
\end{lem}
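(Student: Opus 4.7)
The plan is to establish three things in sequence: convergence of the iterates $(\bu^t,\bw^t)$; a fixed-point equation for the limit; and the geometric bounds on $\theta$ and $\|\bw^* - \bar{\bw}\|$.

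For convergence, I would combine Lemma \ref{lagrangian decreases lemma} with the fact that $\mL_\beta \geq 0$ (since $f \geq 0$ and $P \geq 0$) to conclude $\mL_\beta(\bu^t,\bw^t)\downarrow \mL_\infty$. Re-examining the proof of Lemma \ref{lagrangian decreases lemma} under the hypothesis $\eta \le \frac{1}{\beta+L}$ shows there is a constant $c>0$ so that $\mL_\beta(\bu^{t+1},\bw^{t+1}) - \mL_\beta(\bu^t,\bw^t) \leq -c\,\|\bw^{t+1}-\bw^t\|^2$; summing telescopically gives $\sum_t \|\bw^{t+1}-\bw^t\|^2 < \infty$, hence $\|\bw^{t+1}-\bw^t\|\to 0$. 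Because $\|\bw^t\|=1$ lies on the compact unit sphere, we can extract a convergent subsequence $\bw^{t_j}\to \bar{\bw}$, and continuity of soft thresholding gives $\bu^{t_j+1}=S_{\lambda/\beta}(\bw^{t_j})\to S_{\lambda/\beta}(\bar{\bw})=:\bar{\bu}$. To upgrade to full-sequence convergence, I would combine $\|\bw^{t+1}-\bw^t\|\to 0$ with the monotone convergence $\theta^t\downarrow \theta_\infty$ from Lemma \ref{angle decreases lemma} and the continuity of the update map; together these force the limit set to be a single point.

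For the fixed-point equation, pass to the limit in the update rule, using the explicit form $\mE_\Z[g(\bar{\bw},\Z)] = \frac{k}{2\pi}(\bar{\bw}-\bw^*)$ from (\ref{coarse gradient formula}). Writing $M=\|\hat{\bw}\|$ at the limit yields
\[
M\bar{\bw} \;=\; \bigl(1-\eta\tfrac{k}{2\pi}-\eta\beta\bigr)\bar{\bw} + \eta\tfrac{k}{2\pi}\bw^* + \eta\beta\,\bar{\bu},
\]
which rearranges to $\alpha\,\bar{\bw} = \eta\frac{k}{2\pi}\bw^* + \eta\beta\,\bar{\bu}$ for a positive constant $\alpha$. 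In particular, $\bar{\bw},\bw^*,\bar{\bu}$ are coplanar. Set up coordinates in that plane with $\bar{\bw}$ on the positive $x$-axis; the key observation is that soft thresholding preserves signs componentwise, so $\langle \bar{\bu},\bar{\bw}\rangle = \sum_i |w_i|(|w_i|-\lambda/\beta)_+ > 0$ whenever $\bar{\bu}\neq 0$, forcing $\gamma<\pi/2$ strictly. Projecting the fixed-point equation onto the perpendicular of $\bar{\bw}$ then gives the identity $\tfrac{k}{2\pi}\sin\theta = \beta\,\|\bar{\bu}\|\sin\gamma$, with $\bw^*$ and $\bar{\bu}$ necessarily lying on opposite sides of $\bar{\bw}$ for the balance to hold.

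For the bounds, combine the hypothesis $\beta \le \frac{k\sin\delta}{2\pi}$ with $\|\bar{\bu}\| \le \|\bar{\bw}\| = 1$ (a property of soft thresholding) to obtain $\sin\theta \le \sin\delta\,\sin\gamma$. Since $\gamma<\pi/2$ strictly, $\sin\gamma<1$, so $\sin\theta<\sin\delta$, and since $\theta<\pi/2$ this yields $\theta<\delta$. The chord-arc identity $\|\bw^*-\bar{\bw}\|=2\sin(\theta/2)$ together with $2\sin(\theta/2)=\sin\theta/\cos(\theta/2)$, a sharper bound on $\|\bar{\bu}\|$ coming from the soft-thresholding contraction $\|\bar{\bu}\|\le \|\bar{\bw}\|-c_0(\lambda/\beta)$ for a suitable $c_0$, and the smallness of $\delta$ then yield the claimed constant $\tfrac{1}{2}\sin\delta\sin\gamma$.

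The main obstacle is two-fold. First, the passage from subsequential to full-sequence convergence of $(\bu^t,\bw^t)$ is delicate because the Lagrangian descent only controls $\|\bw^{t+1}-\bw^t\|$; the cleanest route is to show that the set of limit points forming fixed points of the equation $\alpha\,\bar{\bw}=\eta\tfrac{k}{2\pi}\bw^*+\eta\beta\,\bar{\bu}$ with $\bar{\bu}=S_{\lambda/\beta}(\bar{\bw})$ is discrete (in fact a singleton in the cone determined by the initial sign pattern), combined with the vanishing-step criterion $\|\bw^{t+1}-\bw^t\|\to 0$. Second, pinning down the exact constant $\tfrac{1}{2}$ in the displacement bound requires extracting a slightly stronger geometric estimate than the naive $\sin\theta\le\sin\delta\sin\gamma$, either by exploiting the strict contraction of soft thresholding under the assumption $\tfrac{2\pi}{k}\lambda\sqrt{d}<1$, or by a refined trigonometric argument using that $\theta$ is already known to be strictly less than $\delta$.
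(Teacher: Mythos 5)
Your proposal follows essentially the same route as the paper's proof: Lagrangian descent plus nonnegativity of $\mL_\beta$ gives convergence, passing to the limit in the update yields the parallelism condition $\frac{k}{2\pi}\bw^* + \beta\bar{\bu} \parallelsum \bar{\bw}$, coplanar trigonometry (the paper squares the parallelism identity rather than projecting, but the result is the same) gives $\frac{k}{2\pi}\sin\theta = \beta\|\bar{\bu}\|\sin\gamma$, and the hypotheses on $\beta$ together with $\|\bar{\bu}\|<1$ then force $\theta<\delta$ and the chord bound. The two obstacles you flag are genuine, but they are equally present in the paper, which asserts full-sequence convergence directly from the monotone descent without your telescoping/singleton argument, and which obtains the constant $\tfrac{1}{2}$ from a law-of-sines step ($\|\bw^*-\bar{\bw}\|\le 2\sin\theta$ followed by substituting the sine relation) whose final arithmetic appears to drop a factor, so your caution on that constant is warranted rather than a defect of your proposal.
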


\begin{proof}
Since $\mathcal{L}_\beta(\bu^t,\bw^t)$ is non-negative, by Lemma \ref{angle decreases lemma}, \ref{lagrangian decreases lemma}, $\mathcal{L}_\beta$ converges to some limit $\mathcal{L}$. This implies $(\bu^t,\bw^t)$ converges to some stationary point $(\bar{\bu},\bar{\bw})$. By the update of $\bw^t$, we have
\begin{equation}\label{EquilibriumCondition}
\bar{\bw} = \bar{C}(c_1 \bar{\bw} + \eta c_2\bw^* + \eta\beta \bar{\bu})
\end{equation}
for some constant $\bar{C}, c_1, c_2 > 0$,
where $c_2 = \frac{k}{2\pi}$, $c_1>0$ due to condition $(iii)$, and $\bar{\bu} = S_{\lambda/\beta} (\bar{\bw})$. For expression (\ref{EquilibriumCondition}) to hold, we need
\begin{equation}\label{ParallelCondition}
c_2 \bw^* + \beta\bar{\bu} \parallelsum \bar{\bw}
\end{equation}
Expression (\ref{ParallelCondition}) implies $\bar{\bw},\bar{\bu}$, and $\bw^*$ are co-planar. Let $\gamma := \theta(\bar{\bu},\bar{\bw})$. From expression (\ref{ParallelCondition}), and the fact that $\|\bar{\bw}\|=\|\bw^*\|=1$, we have
\begin{align*}
&(\langle c_2\bw^*+ \beta\bar{\bu}, \bar{\bw} \rangle)^2 = \|c_2\bw^*+\beta\bar{\bu}\|^2\|\bar{\bw}\|^2\\
&(c_2\langle \bw^*,\bar{\bw}\rangle + \beta\langle\bar{\bu},\bar{\bw}\rangle)^2 = \langle c_2\bw^*+\beta\bar{\bu}, c_2\bw^*+\beta\bar{\bu} \rangle
\end{align*}
thus
\begin{align*}
&c_2^2\cos^2\theta + 2c_2\beta\|\bar{\bu}\|\cos\theta\cos\gamma + \beta^2\|\bar{\bu}\|^2\cos^2\gamma\\
=& c_2^2 + 2c_2\beta\|\bar{\bu}\|\cos(\theta+\gamma) + \beta^2\|\bar{\bu}\|^2
\end{align*}
Recall $\cos(a+b) = \cos a\cos b - \sin a\sin b$. Thus
\begin{align*}
&c_2^2\sin^2\theta - 2c_2\beta\|\bar{\bu}\|\sin\theta\sin\gamma + \beta^2\|\bar{\bu}\|^2\sin^2\gamma = 0\\
&(c_2\sin\theta - \beta\|\bar{\bu}\|\sin\gamma)^2 = 0\\
&\frac{k}{2\pi}\sin\theta = \beta\|\bar{\bu}\|\sin\gamma \stepcounter{equation}\tag{\theequation}\label{Sine Relation}
\end{align*}
By the initialization of $\beta$ and the fact that $\|\bar{\bu}\| < 1$, we have
\[ \frac{k}{2\pi}\sin\theta < \frac{k}{2\pi}\sin\delta \]
this implies $\theta < \delta$.

Finally, expression (\ref{EquilibriumCondition}) can also be written as

\begin{equation}\label{ParallelCondition2}
\left( \bw^* - \frac{2\pi}{k} \beta (\bar{\bw}-\bar{\bu}) \right) \parallelsum \bar{\bw}
\end{equation}

From expression \eqref{ParallelCondition2}, we see that $\bw^*$, after subtracting some vector whose signs agree with $\bar{\bw}$, and whose non-zero components have the same magnitude $\frac{2\pi}{k}\lambda$, is parallel to $\bar{\bw}$. This implies $\bar{\bw}$ is some soft-thresholded version of $\bw^*$, modulo normalization. Moreover, since $\left\|\frac{2\pi}{k}\beta(\bar{\bw}-\bar{\bu})\right\| \leq \frac{2\pi}{k}\lambda\sqrt{d}$, for small $\lambda$ such that $\frac{2\pi}{k}\lambda\sqrt{d} < 1$, we must have
\[ \theta\left(\bw^* - \frac{2\pi}{k} \beta (\bar{\bw}-\bar{\bu}), \bar{\bw}\right) = 0 \]
On the other hand,
\begin{align*}
\left\| \bw^* - \frac{2\pi}{k} \beta (\bar{\bw}-\bar{\bu}) \right\|
&\geq \|\bw^*\| - \left\|\frac{2\pi}{k}\beta(\bar{\bw}-\bar{\bu})\right\| \\
&\geq 1 - \frac{2\pi}{k}\lambda\sqrt{d}
\end{align*}
therefore,
\begin{equation}\label{Theorem 1 result}
\bw^* - \frac{2\pi}{k} \beta (\bar{\bw}-\bar{\bu}) = C \bar{\bw}
\end{equation}
for some constant $C$ such that $0 < C \leq \frac{k}{k-2\pi\lambda\sqrt{d}}$.\\

Finally, consider the equilateral triangle with sides $\bw^*,\bar{\bw}$, and $\bw^* - \bar{\bw}$. By the law of sines,
\[ \frac{\|\bw^*-\bar{\bw}\|}{\sin\theta} = \frac{\|\bw^*\|}{\sin\theta(\bar{\bw},\bw^*-\bar{\bw})}
= \frac{1}{\sin\theta(\bar{\bw},\bw^*-\bar{\bw})} \]
as $\theta$ is small, $\theta(\bar{\bw},\bw^*-\bar{\bw})$ is near $\frac{\pi}{2}$. We can assume $\sin\theta(\bar{\bw},\bw^*-\bar{\bw}) \geq \frac{1}{2}$. Together with expression \eqref{Sine Relation}, we have
\[ \|\bw^* - \bar{\bw}\| \leq 2\sin\theta 
= \frac{\pi\beta\|\bar{\bu}\|\sin\gamma}{k}
\leq \frac{\pi\beta\sin\gamma}{k} 
\leq \frac{1}{2}\sin\delta\sin\gamma \]
The bound on $\|\bw^*-\bar{\bu}\|$ follows directly from triangle inequality.
\end{proof}

Combining Lemmas \ref{gradient lemma} - \ref{properties of limit point}, Theorem \ref{Main result} is proved.\qed

\subsection{Proof of Corollary}
\begin{lem}\cite{Zhang1}\label{TL1argmin}
Let 
\[ f_{\lambda,x}(y) = \frac{1}{2}(y-x)^2 + \lambda\,  \rho_a(y),  \]
\[ g_\lambda(x) = sgn(x)\left\{\frac{2}{3}(a+|x|)\cos\left(\frac{\phi(x)}{3}\right)-\frac{2a}{3} + \frac{|x|}{3}\right\} \]
where $\phi(x) = \arccos\left(1-\frac{27\lambda a(a+1)}{2(a+|x|)^3}\right)$.
Then $y_\lambda^*(x) = \arg\min_y f_{\lambda,x}(y)$ is the T$\ell_1$ thresholding, equal to $g_\lambda(x)$ if $|x|> t$; zero elsewhere.
Here $t=\lambda\frac{a+1}{a}$ if $\lambda \leq \frac{a^2}{2(a+1)}$;
$ t = \sqrt{2\lambda(a+1)} - \frac{a}{2}$, elsewhere.
\end{lem}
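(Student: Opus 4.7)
The plan is to characterize the proximal operator of $\lambda\rho_a$ in three stages: symmetry reduction to $x\ge 0$, closed-form solution of the resulting cubic stationarity equation via Viète's trigonometric method, and determination of the zero-threshold $t$ by comparing candidate minimizers. Because $\rho_a(y)$ depends only on $|y|$ and is nondecreasing in $|y|$, while $\tfrac{1}{2}(y-x)^2$ at fixed $|y|$ is minimized by $y = |y|\,sgn(x)$, any global minimizer of $f_{\lambda,x}$ satisfies $y\cdot x\ge 0$. It therefore suffices to treat $x\ge 0$ and search on $y\ge 0$; the general case follows from $y_\lambda^*(x) = sgn(x)\,y_\lambda^*(|x|)$, which accounts for the overall $sgn(x)$ factor in $g_\lambda$.

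For $y>0$, $\rho_a$ is smooth with $\rho_a'(y) = \frac{a(a+1)}{(a+y)^2}$, so the stationarity condition $f'_{\lambda,x}(y)=0$ becomes the cubic $(y-x)(a+y)^2 + \lambda a(a+1) = 0$. The substitution $v=a+y$ yields $v^3 - (a+x)v^2 + \lambda a(a+1) = 0$, and the Tschirnhaus shift $v = w + (a+x)/3$ gives a depressed cubic $w^3 + pw + q = 0$ with $p = -(a+x)^2/3$ and $q = \lambda a(a+1) - 2(a+x)^3/27$. When $4p^3 + 27q^2 < 0$ there are three real roots given by Viète's formula $w_k = 2\sqrt{-p/3}\cos\bigl(\tfrac{1}{3}\arccos(\tfrac{3q}{2p}\sqrt{-3/p}) - \tfrac{2\pi k}{3}\bigr)$. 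Substituting the explicit $p,q$ simplifies $2\sqrt{-p/3} = \tfrac{2}{3}(a+x)$ and reduces the arccos argument to $1 - \frac{27\lambda a(a+1)}{2(a+x)^3} = \cos\phi(x)$. Taking the $k=0$ branch and back-substituting $y = w + (x-2a)/3$ produces exactly $g_\lambda(x) = \tfrac{2}{3}(a+x)\cos(\phi/3) - \tfrac{2a}{3} + \tfrac{x}{3}$. Since $\cos(\phi/3)\in[\tfrac{1}{2},1]$, $w_0$ is the largest of the three real roots and hence the largest critical point of $f_{\lambda,x}$ on $(0,\infty)$; by coercivity and the sign pattern of $f'$, it is a local minimum. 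The other candidate is $y=0$, which is a subdifferential critical point since $\partial\rho_a(0) = [-\tfrac{a+1}{a}, \tfrac{a+1}{a}]$.

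To determine $t$, note that $f''_{\lambda,x}(y) = 1 - \frac{2\lambda a(a+1)}{(a+y)^3}$ is nonnegative on $[0,\infty)$ iff $\lambda \le \frac{a^2}{2(a+1)}$. In this convex regime the minimizer on $[0,\infty)$ is unique, so $y=0$ wins precisely when the right-derivative at $0$ is nonnegative, i.e., $-x + \lambda(a+1)/a \ge 0$, giving $t = \lambda(a+1)/a$. In the nonconvex regime ($\lambda > \frac{a^2}{2(a+1)}$), $y=0$ may remain the global minimum past this value, and the correct crossover is the unique $x=t>0$ satisfying $f_{\lambda,x}(0) = f_{\lambda,x}(g_\lambda(x))$, i.e., $\tfrac{1}{2}x^2 = \tfrac{1}{2}(g_\lambda-x)^2 + \lambda\rho_a(g_\lambda)$. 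Combining this equal-value identity with the critical-point cubic $(g_\lambda - x)(a+g_\lambda)^2 = -\lambda a(a+1)$ and eliminating $g_\lambda$ yields a polynomial equation in $x$ whose relevant positive root is $t = \sqrt{2\lambda(a+1)} - a/2$.

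The main obstacle is the Case~2 algebra: cleanly eliminating $g_\lambda(t)$ between the cubic stationarity equation and the quadratic equal-value condition to isolate the closed form $\sqrt{2\lambda(a+1)} - a/2$ requires careful manipulation, and verifying that Viète's $k=0$ branch matches $g_\lambda$ uniformly over $x>t$ demands tracking the range of $\phi$ and sign conventions. Once these algebraic identifications are settled, the full thresholding characterization follows from routine first-order comparisons on the two regimes.
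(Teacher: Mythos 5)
The paper does not actually prove this lemma; it is imported verbatim from \cite{Zhang1}, so there is no internal argument to compare yours against. Your derivation is essentially the one in that reference: reduce to $x\ge 0$ by symmetry, solve the stationarity cubic $(y-x)(a+y)^2+\lambda a(a+1)=0$ by the trigonometric (Vi\`ete) method to obtain $g_\lambda$, and locate the threshold by comparing $y=0$ against the largest stationary point. The steps you carry out explicitly are all correct: the depressed-cubic coefficients, the simplification of the arccos argument to $1-\frac{27\lambda a(a+1)}{2(a+|x|)^3}$, the back-substitution giving $g_\lambda$, the identification of the $k=0$ branch as the largest root (since $\phi/3\in[0,\pi/3]$), and the convexity criterion $\lambda\le\frac{a^2}{2(a+1)}$ from $f''_{\lambda,x}(y)=1-\frac{2\lambda a(a+1)}{(a+y)^3}$. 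The two items you flag as unfinished do close, and more easily than you fear. For the Case-2 threshold: the equal-value condition factors as $\frac{1}{2}(2t-y^*)\,y^*=\lambda\frac{(a+1)y^*}{a+y^*}$, and combining it with $t-y^*=\frac{\lambda a(a+1)}{(a+y^*)^2}$, multiplying through by $(a+y^*)^2$ and cancelling $y^*\neq 0$, collapses to $(a+y^*)^2=2\lambda(a+1)$; hence $y^*=\sqrt{2\lambda(a+1)}-a$ and $t=y^*+\frac{\lambda a(a+1)}{2\lambda(a+1)}=\sqrt{2\lambda(a+1)}-\frac{a}{2}$, with $y^*>0$ exactly in the nonconvex regime, and the two threshold formulas agreeing at $\lambda=\frac{a^2}{2(a+1)}$. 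For the well-posedness of the Vi\`ete branch: the three-real-root condition $4p^3+27q^2\le 0$ reduces to $(a+x)^3\ge\frac{27}{4}\lambda a(a+1)$, which holds for every $x>t$ in both regimes by AM--GM applied to $a+x>\frac{a}{2}+\frac{a}{2}+\frac{\lambda(a+1)}{a}$ (convex case) or $a+x>\frac{a}{2}+\frac{1}{2}\sqrt{2\lambda(a+1)}+\frac{1}{2}\sqrt{2\lambda(a+1)}$ (nonconvex case). With those two computations inserted, your sketch is a complete, self-contained proof of the cited result.
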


\begin{lem}\cite{Blumensath2}\label{l0argmin}
Let $f_{\lambda,x}(y) = \frac{1}{2}(y-x)^2 + \lambda\,  \|y\|_0. $
Then $y_\lambda^*(x) = \arg\min_y f_{\lambda,x}(y)$ is the $\ell_0$ hard  thresholding $y^*_\lambda(x) = x$, if $|x| > \sqrt{2\lambda}$; zero elsewhere. 
\end{lem}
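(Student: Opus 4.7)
\textbf{Proof plan for Lemma \ref{l0argmin}.} The plan is to exploit the separability of both the quadratic fidelity and the $\ell_0$ penalty, reducing the minimization to $d$ independent scalar subproblems, each of which is solved by a simple two-value comparison.

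First I would write
\[ \tfrac{1}{2}(y-x)^2 = \tfrac{1}{2}\sum_{i=1}^d (y_i - x_i)^2, \qquad \|y\|_0 = \sum_{i=1}^d \mathbf{1}_{\{y_i \neq 0\}}, \]
so that $f_{\lambda,x}(y) = \sum_i \bigl[\tfrac{1}{2}(y_i-x_i)^2 + \lambda\, \mathbf{1}_{\{y_i\neq 0\}}\bigr]$ decouples across coordinates. Consequently $y^*_\lambda(x)$ is obtained coordinatewise, and it suffices to solve, for each fixed scalar $x \in \mathbb{R}$,
\[ \min_{y \in \mathbb{R}} \Bigl\{ \tfrac{1}{2}(y-x)^2 + \lambda\, \mathbf{1}_{\{y \neq 0\}} \Bigr\}. \]

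Next I would split this scalar problem into the two branches $y = 0$ and $y \neq 0$. On the branch $y = 0$ the objective equals $\tfrac{1}{2}x^2$. On the branch $y \neq 0$ the indicator is the constant $\lambda$, so the objective reduces to $\tfrac{1}{2}(y-x)^2 + \lambda$, whose unconstrained minimum value is $\lambda$, achieved at $y = x$ (which actually lies on the branch $y \neq 0$ whenever $x \neq 0$; if $x = 0$ both branches collapse to value $0$ anyway). Comparing the two candidate values $\tfrac{1}{2}x^2$ and $\lambda$ shows that $y^* = x$ strictly beats $y^* = 0$ iff $\tfrac{1}{2}x^2 > \lambda$, i.e.\ $|x| > \sqrt{2\lambda}$, and otherwise $y^* = 0$, which is exactly the claimed hard-thresholding rule.

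There is no genuine obstacle in this argument — the only mildly delicate point is the tie at the boundary $|x| = \sqrt{2\lambda}$, where both $y=0$ and $y=x$ are minimizers and the lemma simply fixes one convention. The full result is due to \cite{Blumensath2}, but it can be reproduced from scratch by the separability-plus-case-split sketch above, or equivalently by computing the proximal map of $\lambda\|\cdot\|_0$.
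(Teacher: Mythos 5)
Your proof is correct: the separability of both terms reduces the problem to independent scalar comparisons between the value $\tfrac{1}{2}x^2$ at $y=0$ and the value $\lambda$ at $y=x$, which yields exactly the hard-thresholding rule with threshold $\sqrt{2\lambda}$, and your handling of the tie matches the lemma's convention of sending $|x|=\sqrt{2\lambda}$ to zero. Note that the paper provides no proof of this lemma at all --- it is quoted directly from \cite{Blumensath2} as a known fact --- so there is nothing to compare against; your elementary separability-plus-case-split derivation is the standard one and would fill that gap if a self-contained proof were desired.
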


We proceed by an outline similar to the proof of Theorem \ref{Main result}:

Step 1. First we show that $L_{\beta,T\ell_1}(\bu^t,\bw^t)$ and $L_{\beta,0}(\bu^t,\bw^t)$ both decrease under the update of $\bu^t$ and $\bw^t$. To see this, notice that the update on $\bu^t$ decreases $L_{\beta,T\ell_1}(\bu^t,\bw^t)$ and $L_{\beta,0}(\bu^t,\bw^t)$ by definition. Then, for a fixed $\bu = \bu^{t+1}$, the update on $\bw^t$ decreases $L_{\beta,T\ell_1}(\bu^t,\bw^t)$ and $L_{\beta,0}(\bu^t,\bw^t)$ by a similar argument to that found in Theorem \ref{Main result}.

Step 2. Next, we show $\theta(\bw^t,\bw^*) \leq \pi-\delta$, for some $\delta > 0$, for all $t$, with initialization $\theta(\bw^0,\bw^*) = \pi-\delta$. For $L_{\beta,T\ell_1}(\bu^t,\bw^t)$, by Lemma \ref{TL1argmin}, we have
\[ \bu^{t+1} = (g_{\lambda/\beta}(w_1^t),g_{\lambda/\beta}(w_2^t),...,g_{\lambda/\beta}(w_d^t)) \]
And for $L_{\beta,0}(\bu^t,\bw^t)$ , by Lemma \ref{l0argmin},
\[ \bu^{t+1} = (w_1^t\chi_{\{|w_1^t|\geq t\}}, w_2^t\chi_{\{|w_2^t|\geq t\}},...) \]
In both cases, each component of $\bu^{t+1}$ is a thresholded version of the corresponding component of $\bw^t$. This implies $\theta(\bu^{t+1},\bw^t) \leq \frac{\pi}{2}$, and thus the argument in Theorem \ref{Main result} follows through, and we have $\theta(\bw^t,\bw^*) \leq \pi-\theta$, for all $t$.

Step 3. Finally, the equilibrium condition from equation $\eqref{ParallelCondition}$ still holds for the critical point, and a similar argument shows that $\theta(\bar{\bw},\bw^*) < \delta$. \qed

\section{Numerical Experiment}
Let $(k,d)=(20,50)$, $\beta=10^{-3}$, $\eta =10^{-5}$, $\|\bw^*\|_0=s \ll d$, $\bw^* =s^{-1/2} (1,\cdots,1,0,\cdots,0)$. Table \ref{tab:1} shows result by RVSCGD with $\ell_0$ penalty,   $\lambda=10^{-4}$, random start in $\bw$. The sparsity $s$ is exactly recovered in $\bar{\bu}$, with small angular errors and loss values. The angle descent vs. iterations is seen in Fig. \ref{angleplot}. 

\begin{table}[!ht]
\caption{RVSCGD regression: angular errors, loss and sparsity.}
\label{tab:1}
\vskip 0.1in
\begin{center}
\begin{small}
\begin{tabular}{lccccc}
  \toprule
  measure & s=2 & s=4 & s=6&s=8&s=10\\
  \midrule
  $\theta(\bar{\bu},\bw^*)$ &  0.0076 & 0.0122  & 0.0126 & 0.0133& 0.0143\\
  $\theta(\bar{\bw},\bw^*)$ & 0.0492  & 0.0498 & 0.0499 & 0.0497&0.0493\\
  $f(\bar{\bu})$ & 0.0243 & 0.0388 & 0.0402 & 0.0423& 0.0456\\
  $\|\bar{\bu}\|_0$ & 2 & 4 & 6 &8&10\\
  \bottomrule
\end{tabular}
\end{small}
\end{center}
\end{table}

\begin{figure}[!ht]
\begin{center}
\centerline{\includegraphics[width=0.8\columnwidth]{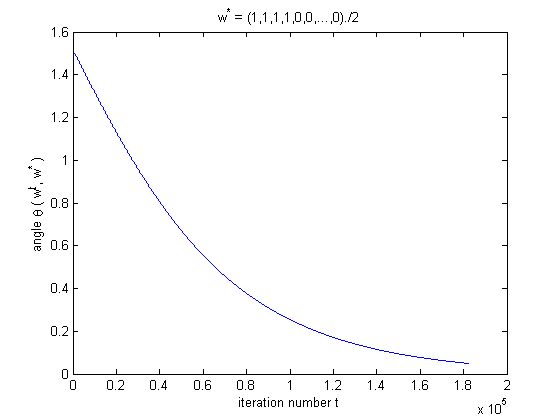}}
\caption{Descent of angle between $\bw^t$ and $\bw^*$ vs iteration number ($s=4$), in line with Lemma \ref{angle decreases lemma}.}
\label{angleplot}
\end{center}
\vskip -0.2in
\end{figure}

\section{Conclusion}
We introduced a variable splitting coarse gradient descent method to learn a one-hidden layer neural network with sparse weight and binarized activation in a regression setting. The proof is based on the descent of a  Lagrangian function and the angle between the sparse and true weights, and applies to $\ell_1$, $\ell_0$ and T$\ell_1$ sparse penalties. We plan to extend our work to a classification setting in the future.  

\section{Acknowledgement}
The work was partially supported by NSF grant IIS-1632935. The authors thank Dr. Penghang Yin for the helpful comments.

\bibliographystyle{plain}
\bibliography{tdbib}

\end{document}